\newtheorem{theorem}{Theorem}[section]
\newtheorem{proposition}[theorem]{Proposition}
\newtheorem{lemma}[theorem]{Lemma}
\newtheorem{assumption}[theorem]{Assumption}
\newtheorem{remark}[theorem]{Remark}
\newtheorem{definition}[theorem]{Definition}
\newcommand{\mc}{\mathcal}
\renewcommand{\dim}{\operatorname{dim}}
\newcommand{\dpair}[2]{\left\langle#1,#2\right\rangle}
\newcommand{\rank}[1]{\operatorname{rank}\left(#1\right)}
\newcommand{\clos}[2]{\overline{#2}^{#1}}
\newcommand{\linspan}[1]{\operatorname{span}\left\{#1\right\}}
\renewcommand{\Im}[1]{\operatorname{Im}\left(#1\right)}
\newcommand{\bs}{\boldsymbol}
\renewcommand{\r}{\bs r}
\newcommand{\T}{\mc T_{\r}}
\newcommand{\id}{\operatorname{id}}
\newcommand{\R}{\mathbb R}
\newcommand{\N}{\mathbb N}
\newcommand{\G}{\mathbb G}
\newcommand{\Umin}{U^{\min}}
\newcommand{\mix}{\text{mix}}
\newcommand{\cref}{\ref}
\newcommand{\Cref}{\ref}
\begin{document}

\title{Singular Value Decomposition
       in Sobolev Spaces: Part I}

\author{Mazen Ali and Anthony Nouy}

\address{M. Ali: Ulm University, Institute for Numerical Mathematics}
\email{mazen.ali@uni-ulm.de}
\address{A. Nouy: Centrale Nantes, LMJL (UMR CNRS 6629)}
\email{anthony.nouy@ec-nantes.fr}

\begin{abstract}
A well known result from functional analysis states that any compact operator
between Hilbert spaces admits a
singular value decomposition (SVD). This decomposition
is a powerful tool that is the workhorse
of many methods both in mathematics and applied fields. A prominent
application in recent years is the approximation of
high-dimensional functions in a low-rank format.
This is based on the fact that, under certain conditions, a tensor
can be identified with a compact operator and SVD applies to the latter.
One key assumption for this application is that the tensor product norm
is not weaker than the injective norm. This assumption is not fulfilled in
Sobolev spaces, which are widely used in the theory and numerics of
partial differential equations. Our goal is the analysis of the
SVD in Sobolev spaces.

This work consists of two parts. In this manuscript (part I), we address low-rank
approximations and minimal subspaces in $H^1$. We analyze the $H^1$-error of
the SVD performed
in the ambient $L^2$-space. In part II, we will address variants of
the SVD in norms stronger than the $L^2$-norm.
We will provide a few numerical examples
that support our theoretical findings.
\end{abstract}

\keywords{
    Singular Value Decomposition (SVD),
    Higher-Order Singular Value Decomposition (HOSVD),
    Low-Rank Approximation,
    Tensor Intersection Spaces,
    Sobolev Spaces,
    Minimal Subspaces
}

\subjclass[2010]{46N40 (primary), 65J99 (secondary)}

\maketitle

\section{Introduction}
\subsection{Low-Rank Approximation}
We begin by providing a few motivating examples where the need for
low-rank approximation arises. We do not aim to provide a comprehensive
overview and refer instead
to, e.g., \cite{Khoromskaia2018, Khoromskij2018, HK05, HB, Nouy2017}
for more details.

Consider a prototypical situation in numerical approximation where
a function $f\in C([0,\,1]^d)$ is approximated by a discrete function
on a finite grid
\begin{align}\label{eq:fd}
    f_{k_1,\,\ldots,\,k_d} := f(x_{k_1,\,\ldots,\,x_d}),\quad
    x_{k_1,\,\ldots,\,x_d} := \left(k_1h,\,\ldots,\,
    k_dh\right)\in [0,\,1]^d,\quad 0\leq k_j\leq n,
\end{align}
where $h=1/n$ and $n+1$ is the number of grid points in each dimension.
Simply storing such an approximation requires storing at least
$(n+1)^d$ values -- and this number grows exponentially in $d$.
This quickly becomes unfeasible for problems with a large dimension $d$.

Low-rank approximation is a widely used tool to address high-dimensional
problems. Suppose we can approximate $f$ in the form
\begin{align*}
    f\approx f_r:=\sum_{k=1}^r f_k^1\otimes\ldots\otimes f_k^d.
\end{align*}
Storing or evaluating the right-hand-side $f_r$
requires only the knowledge of the
$rd$ one-dimensional entries $f_k^j$ and thus reduces the cost to $nrd$.
Thus, this is a significant reduction in cost, \emph{provided $r$ is
small} in some sense (hence, the term \emph{low-rank}).

Prominent applications where such high-dimensional problems appear arise
in quantum mechanics and quantum chemistry. Wave functions describing physical
states of non-relativistic quantum systems are functions in the space
$L^2(\R^{3N})$, where $N$ is the number of elementary particles. Thus,
for multi-particle systems the dimension $d=3N$ is large even for comparatively
simple models. A wave function $\psi\in L^2(\R^{3N})$ is then given as a solution
to a PDE, such as the Schrödinger equation, and $\psi$ is approximated
via, e.g., a finite difference method with point values as in \eqref{eq:fd},
or through the coefficients of some basis expansion
$\psi\approx\sum_{k_1,\,\ldots,\,k_d}c_{k_1,\,\ldots,\,k_d}
\varphi_{k_1,\,\ldots,\,k_d}$,
where $\varphi_{k_1,\,\ldots,\,k_d}$ is some multi-dimensional basis
of functions such as finite elements or wavelets.
See \cite{Khoromskaia2018} for more details.

Yet another application is the approximation of operators or inverse
operators for either solving or pre-conditioning equations. Suppose a
matrix or an operator are given in the form
\begin{align}\label{eq:A}
    A = \sum_{k=1}^d \id_1\otimes\ldots\otimes\id_{k-1}\otimes A_k
        \otimes\id_{k+1}\otimes\ldots\otimes\id_d.
\end{align}
Assume that a function $\varphi$ applied to $A$ has the integral
representation
\begin{align*}
    \varphi(A)=\int_\Omega\exp(AF(t))G(t)dt,
\end{align*}
for some $\Omega\subset\R$ and functions $F$ and $G$. This is valid for, e.g.,
the inverse function $\varphi(x)=x^{-\sigma}$ and a self-adjoint positive $A$.
Then, since $A$ has the form as in \eqref{eq:A}, a quadrature rule for
approximating the integral
\begin{align*}
    \varphi(x)=\int_\Omega\exp(xF(t))G(t)dt,\quad x\in\R,
\end{align*}
provides a separable approximation to $\varphi(A)$. For more details
we refer to \cite{Khoromskij2018, HK05}.

\subsection{The Singular Value Decomposition}
Let $T:H_1\rightarrow H_2$ be a continuous
compact linear operator between Hilbert spaces.
Then, for any $x\in H_1$
\begin{align}\label{eq:svd}
    Tx = \sum_{k=1}^\infty\sigma_k\dpair{x}{\psi_k}_{H_1}\phi_k,
\end{align}
for a non-negative non-increasing sequence $\{\sigma_k\}_{k\in\N}$ and orthonormal
systems\\
$\{\psi_k\}_{k\in\N}\subset H_1$ and
$\{\phi_k\}_{k\in\N}\subset H_2$.
The representation \eqref{eq:svd} is known as the \emph{singular value
decomposition} of $T$, or SVD for short. It is both a powerful analysis tool and
an approximation tool. Perhaps the most important feature of this decomposition
can be summarized as
\begin{align*}
    \left\|T-
    \sum_{k=1}^r \sigma_k\dpair{\cdot}{\psi_k}_{H_1}\phi_k\right\|=
    \sigma_{r+1}
    =\inf_{
    \rank{A}\leq r}\|T-A\|,
\end{align*}
where $\|\cdot\|$ refers to the standard operator norm and 
where the infimum is taken over all operators $A$ from $H_1$ to $H_2$ with rank bounded by $r$. I.e.,
\eqref{eq:svd} gives both the optimal approximation with rank $\leq r$
(for any $r$), obtained
by truncating the SVD, and the singular values $\{\sigma_k\}_{k\in\N}$
provide the best approximation errors.

SVD has many applications in both mathematics and applied
sciences. To name a few: computation of pseudoinverse, determination of rank
(and null space, range), least-squares minimization,
principal component analysis, proper orthogonal decomposition,
data compression, quantum entanglement.
For recent applications in model reduction see
\cite{RB, MOR}. The subject of this work is the application
of SVD to low-rank approximation of functions, see \cite{HB}.

A function $u$ in the tensor product $H_1\otimes H_2$ of two Hilbert spaces
$H_1$ and $H_2$ possesses a decomposition
\begin{align}\label{eq:introsvd}
    u=\sum_{k=1}^\infty\sigma_k\psi_k\otimes\phi_k,
\end{align}
if the norm on the tensor product space is not weaker than the injective norm. This
guarantees that $u$ can be identified with a compact operator and thus
\eqref{eq:svd} applies. These conditions are certainly satisfied for functions
between finite dimensional spaces. There are also important examples of
infinite dimensional spaces, where this is satisfied as well. The
most prominent example is the space of square integrable functions
$L^2(\Omega_1\times\Omega_2)$.

Low-rank approximations are of essential importance when dealing with
tensor product spaces $\bigotimes_{j=1}^d H_j$, $d\gg 2$.
There is no known generalization of SVD to $d>2$.
However, if we consider the vector space isomorphism (given appropriate norms),
$
    \bigotimes_{j=1}^d H_j\cong\left(\bigotimes_{j\in\alpha} H_j\right)
    \otimes\left(\bigotimes_{j\in \alpha^c} H_j\right),$
    $\alpha\subset\{1,\ldots, d\},
$
we can apply SVD in the latter tensor space
since this is again a two dimensional tensor product. This is known as
the \emph{higher-order singular value decomposition}
(HOSVD), see \cite{Lars, HBSEMA}. Thus, the theory for $d=2$ can be recycled for
higher dimensions. This applies to high-dimensional kernel operators
in $L^2(\bigtimes_{j=1}^d\Omega_j)$.

There are two other works\footnote{That we are aware of.} that considered the
related questions of regularity and error estimation of the $L^2$-SVD.
In \cite{Andre} the author showed that the $L^2$-SVD inherits the regularity
of the original function.
In \cite{Linf} the author investigated
$L^\infty$-error control of the $L^2$-SVD for functions with sufficient
smoothness by using the Gagliardo-Nirenberg inequality.

An important example where \eqref{eq:introsvd} does not apply are
multi-dimensional Sobolev spaces. The Sobolev norm on the tensor product space
is not weaker than the injective norm and thus Sobolev functions can not be
identified with compact operators. Another way of framing this from an
approximation standpoint: we can not
apply SVD to functions while controlling the Sobolev norm. However, not all
hope is lost, since Sobolev spaces are ``in between'' spaces where SVD
applies. E.g., the space of square integrable functions or the space of
functions with mixed smoothness. Moreover, Sobolev spaces such as $H^1(\Omega)$
can be identified with an intersection of tensor product spaces, where SVD
applies in each of the spaces in the intersection.

The purpose of this work is to analyze if and how SVD can be applied to
approximate functions in a Sobolev space. We work with the prototype
$H^1(\Omega)$, which frequently arises as the solution space of
partial differential equations. The results can be naturally extended to
the spaces $H^k(\Omega)$, $k>1$.
The paper is organized as follows. In Section \cref{sec:prelim} we briefly review
some of the basics of tensor spaces. In Section \cref{sec:approx} we discuss
low-rank approximation and minimal subspaces in Sobolev spaces.
In Section \cref{sec:l2svd} we analyze the $H^1$-error of the SVD performed
in the
ambient $L^2$ space ($L^2$-SVD).

\section{Preliminaries}\label{sec:prelim}
We briefly review some of the theory on tensor spaces and minimal subspaces.
Most of the following material can be found in \cite{HB}, some of it in
\cite{MTP}. We use the notation
$
    A\lesssim B$ $\Leftrightarrow$ $A\leq CB,
$
for some constant $C>0$ independent of $A$ or $B$. Similarly for
$\gtrsim$; and $\sim$ if both $\lesssim$ and $\gtrsim$ hold. We use
$\cong$ to denote vector space isomorphisms, with equivalent norms where
relevant.

\subsection{Algebraic Tensor Spaces}
Let $V=X\otimes_a Y$ be an algebraic tensor product space, where
$X$ and $Y$ are vector spaces. Briefly, it is the space of all
sums of the form
$
    v = \sum_{k=1}^r x\otimes y,$ $x\in X,\;y\in Y, \;r\in\N,
$
where the tensor product $\otimes$ is bilinear on $X\times Y$.
See \cite[Chapter 3.2]{HB} for a precise definition of the tensor product.

This construction can be extended for more than two vector spaces to obtain
the tensor space
$
    V={}_a\bigotimes_{j=1}^d X_j,
$
with elements
$
    v=\sum_{k=1}^r\bigotimes_{j=1}^dx_j,$ $x_j\in X_j,
    \;r\in\N.
$
We will sometimes require the isomorphic representations
\begin{align*}
    {}_a\bigotimes_{j=1}^d X_j\cong X_i\otimes_a\left(
    {}_a\bigotimes_{j\neq i}X_j
    \right)
    \cong\left({}_a\bigotimes_{j\in\alpha}X_j\right)\otimes_a
    \left({}_a\bigotimes_{i\in\alpha^c}X_i\right),
\end{align*}
where
$
    \alpha\subset\{1,\ldots,d\},\;\alpha^c=\{1,\ldots,d\}\setminus\alpha.
$

\subsection{Tensor Norms and Banach Tensor Spaces}
If we are given a norm $\|\cdot\|$ on the vector space
$V={}_a\bigotimes_{j=1}^d X_j$, we can consider the completion w.r.t.\ that
norm.

\begin{definition}[Topological Tensor Product]
    The space
    \begin{align*}
        {}_{\|\cdot\|}\bigotimes_{j=1}^d X_j:=
        \clos{\|\cdot\|}{{}_a\bigotimes_{j=1}^d X_j},
    \end{align*}
    is called a topological tensor product.
\end{definition}

Let each of the $X_j$ be a normed vector space. Since
$\|\cdot\|$ induces a topology on $V$ and with the product topology on
$\bigtimes_{j=1}^dX_j$, we can ask if
$
    \otimes:\bigtimes_{j=1}^dX_i\rightarrow V
$
is continuous. In fact, many useful properties in the analysis of tensor
product spaces require even stronger conditions. For ease of presentation,
we list the definitions for $d=2$.

\begin{definition}[Crossnorms]
    A norm on $V= X\otimes_a Y$ is called a crossnorm if
$
        \|x\otimes y\|=\|x\|_X\|y\|_Y.
$
    It is called a reasonable crossnorm if it is a crossnorm and
$
        \|x^*\otimes y^*\|^*=\|x^*\|_{X^*}\|y^*\|_{Y^*},$ $
        x^*\in X^*,\; y^*\in Y^*
$
    where $\|\cdot\|^*$ denotes the standard dual norm on the topological dual $Z^*$ of a space $Z$.

    It is called a uniform crossnorm if it is a reasonable crossnorm and
$
        \|A\otimes B\|=\|A\|\|B\|,$ $A\in\mc L(X, X),\;
        B\in \mc L(Y, Y),
$
    with the standard operator norms and where $\mc L(X, Y)$ denotes the space of
    continuous linear operators from $X$ to $Y$.
\end{definition}

There are two important examples of reasonable crossnorms which are
the strongest and the weakest crossnorms
(see \cite[Chapter 1.1.2]{MTP} for a justification of the terminology).
\begin{definition}[Projective and Injective Norms]\label{def:proinnrms}
    The projective norm on $V=X\otimes_a Y$ is defined as
    \begin{align*}
        \|v\|_{\wedge}:=
        \inf\left\{
        \sum_{i=1}^m\|x_i\|_X\|y_i\|_Y:
        v=\sum_{i=1}^mx_i\otimes y_i
        \right\},
    \end{align*}
    where the infimum is taken over all possible representations of $v$.
    The injective norm on $V=X\otimes_a Y$ is defined as
    \begin{align*}
        \|v\|_{\vee}:=\sup_{\varphi\in X^*\setminus\{0\},\;
        \psi\in Y^*\setminus\{0\}}\frac{|(\varphi\otimes\psi)v|}
        {\|\varphi\|_{X^*}\|\psi\|_{Y^*}}.
    \end{align*}
\end{definition}
By \cite[Proposition 4.68]{HB},
we have that for any reasonable crossnorm $\|\cdot\|$,
$
    \|\cdot\|_{\vee}\lesssim \|\cdot\|\lesssim\|\cdot\|_{\wedge}.
$
In this work we will frequently require the following definition.
\begin{definition}[Hilbert Tensor Space with Canonical Norm]
    Let $H=H_1\otimes_a H_2$ be an algebraic tensor product of two Hilbert
    spaces $H_1$ and $H_2$. The canonical inner product (and associated
    canonical norm) on $H$ is
    defined such that
$
        \dpair{x_1\otimes y_1}{x_2\otimes y_2}=
        \dpair{x_1}{x_2}_{H_1}\cdot\dpair{y_1}{y_2}_{H_2}.
$
    By linearity this definition extends to any $v\in H$. The canonical
    norm is a uniform crossnorm.
\end{definition}

\subsection{Sobolev Spaces}\label{sec:sobolev}
For the remainder of this work we will require the spaces
\begin{align*}
    L^2(\Omega)=L^2(\bigtimes_{j=1}^d\Omega_j),\quad
    H^1(\Omega)=H^1(\bigtimes_{j=1}^d\Omega_j).
\end{align*}
We use the shorthand notation $\|\cdot\|_0$ to denote the $L^2$ norm and
$\|\cdot\|_1$ to denote the $H^1$ norm. This notation
will be used both for the tensor product space and the one dimensional
components, where the difference should be clear from context.
We use $H^1_{\mix}(\Omega)$ to denote spaces of functions with mixed
smoothness with the corresponding norm $\|\cdot\|_{\mix}$.

We have
\begin{align*}
    L^2(\Omega)\cong{}_{\|\cdot\|_0}\bigotimes_{j=1}^dL^2(\Omega_j),\quad
    H^1_\mix(\Omega)\cong{}_{\|\cdot\|_\mix}\bigotimes_{j=1}^dH^1(\Omega_j),
\end{align*}
where $\|\cdot\|_0$ (resp.\ $\|\cdot\|_\mix$) are uniform crossnorms defined
from the norms $\|\cdot\|_0$ (resp.\ $\|\cdot\|_1$) on the individual
spaces $L^2(\Omega_j)$ (resp.\ $H^1(\Omega_j)$).

We frequently require spaces of functions differentiable in only one direction
\begin{align*}
    H^{e_k}:={H^1(\Omega_k)
    \otimes_{\|\cdot\|_{e_k}}\left({}_a\bigotimes_{j\neq k}L^2(\Omega_j)
    \right)},
\end{align*}
where for $e_k=(\delta_{1k},\ldots,\delta_{dk})$
being the $k$-th canonical vector, the norm is defined via
\begin{align*}
    \|v\|_{e_k}^2:=\|v\|_0^2+\left\|
    \frac{\partial}{\partial x_k}v
    \right\|_0^2.
\end{align*}
As in Definition
\cref{def:proinnrms}, we can define the projective and injective norms on
${H^1(\Omega_k)
{}_a\otimes\left({}_a\bigotimes_{j\neq k}L^2(\Omega_j)
\right)}$. We denote these norms by $\|\cdot\|_{\wedge(e_k)}$ and
$\|\cdot\|_{\vee(e_k)}$, respectively.
The space $H^1(\Omega)$ can be identified with the intersection space
\begin{align}\label{eq:sobolev}
    H^1(\Omega)\cong
    \bigcap_{k=1}^d H^{e_k},
\end{align}
where the latter is equipped with the intersection norm
$
    \|\cdot\|:=\max_{1\leq k\leq d}\|\cdot\|_{e_k},
$
or any equivalent norm. The utility in this representation lies in the
fact that $\|\cdot\|_{e_k}$ is the canonical norm on the Hilbert tensor space
$H^{e_k}$ and thus SVD applies (see Section \cref{sec:svd}).
For each $1\leq k\leq d$, we get a different decomposition.

\subsection{Minimal Subspaces and Tensor Formats}
For a tensor in the algebraic tensor space $X\otimes_a Y$, with $X$ and
$Y$ Hilbert spaces, the SVD gives the
representation
$
    v=\sum_{k=1}^r\sigma_k \psi_k\otimes\phi_k.
$
Letting
$
    U_1:=\linspan{\psi_k:1\leq k\leq r},$ $
    U_2:=\linspan{\phi_k:1\leq k\leq r},
$
we have the obvious statement $u\in U_1\otimes_a U_2$. More importantly,
these spaces are minimal in the sense that if $u\in V_1\otimes_a V_2$,
then $U_1\subset V_1$ and $U_2\subset V_2$. Spaces $U_1$ and $U_2$ are called the
\emph{minimal subspaces} of $u$ and they can be defined in a more general
setting.

\begin{definition}[Minimal Subspaces]\label{def:minsub}
    Let $\|\cdot\|\gtrsim \|\cdot\|_{\vee}$ be a norm on
    $V={}_a\bigotimes_{j=1}^d X_j$. For any
    $v\in\clos{\|\cdot\|}{V}$ the $j$-th minimal subspace is defined as
    \begin{align*}
        \Umin_j(v):=\clos{\|\cdot\|_{X_j}}
        {\linspan{\varphi(v):\varphi=\bigotimes_{k=1}^d\varphi_k,\;
        \varphi_j=\id_j,\;\varphi_k\in\left(X_k)^*,\;k\neq j\right)}},
    \end{align*}
    where $\id_j$ denotes the identity operator on $X_j$. 
    This definition can be naturally extended to $\Umin_{\alpha}(v)$ for any
    $\alpha\subset\{1,\ldots,d\}$.
\end{definition}

The question whether
\begin{align}\label{eq:inmin}
    v\in\clos{\|\cdot\|}{{}_a\bigotimes_{j=1}^d \Umin_j(v)}
\end{align}
is not trivial for topological tensors $v\in\clos{\|\cdot\|}{V}$. A positive
answer requires further structure of the component spaces and the tensor norm.

\begin{definition}[Grassmanian]
    Let $X$ be a Banach space. A closed subspace $U\subset X$ is called direct
    or complemented if there exists a closed subspace $W$ such that
    $X=U\oplus W$ is a direct sum. The set $\G(X)$ of all complemented subspaces in $X$
     is called the Grassmanian.
\end{definition}

Any closed subspace $U$ of a Hilbert space $X$ belongs to $\G(X)$.
An important example where \eqref{eq:inmin} is satisfied is when all
$X_j$ are Hilbert spaces and $\|\cdot\|$ is the canonical norm.
The Sobolev space $H^1(\Omega)$ does not have this property. In particular,
$\|\cdot\|\gtrsim\|\cdot\|_{\vee}$ does not hold. However,
$H^1(\Omega)$ is isomorphic to an intersection of tensor spaces, where
each individual space in the intersection satisfies \eqref{eq:inmin}.
This property is frequently exploited in our work.

Ultimately we are interested in low-rank approximations. For $d=2$, there is
only one choice of a low-rank format. However, for $d>2$ there are many
possible low-rank tensor formats.
The two most basic tensor formats are the following.
\begin{definition}[Canonical Format]\label{def:formatsCP}
    Let $r\in\N$. The $r$-term (or canonical) format in
    $V={}_a\bigotimes_{j=1}^d X_j$ is
    defined as
    \begin{align*}
        \mc R_r(V):=\left\{
        v=\sum_{k=1}^r\bigotimes_{j=1}^dx_j^k:x_j^k\in X_j
        \right\}.
    \end{align*}
\end{definition}

\begin{definition}[Tucker Format]\label{def:formatsTucker}
    For $\bs r=(r_1,\ldots, r_d)\in\N^d$, the Tucker format in
    $V={}_a\bigotimes_{j=1}^d X_j$ is defined as
    \begin{align*}
        \mc T_{\bs r}(V):=\left\{
        v\in V:\dim{\Umin_j(v)}\leq r_j
        \right\}.
    \end{align*}
\end{definition}

\subsection{Tensors as Operators and Singular Value Decomposition}\label{sec:svd}
Let
$\mc F(Y, X)$ denote the space of finite rank operators
from $Y$ to $X$, $\mc K(Y, X)$ denote the space of compact operators
from $Y$ to $X$
and $\mc N(Y, X)$ denote the space of nuclear operators from $Y$ to
$X$. Then,
for any reasonable crossnorm $\|\cdot\|$
we get the inclusions
(see \cite[Corollary 4.84]{HB})
\begin{align*}
    \mc N(Y, X)
    \cong
    X\otimes_{\|\cdot\|_{\wedge}} Y^*
    \subset
    X\otimes_{\|\cdot\|} Y^*
    \subset
    X\otimes_{\|\cdot\|_{\vee}} Y^*
    \cong
    \clos{\|\cdot\|_{X\leftarrow Y}}{\mc F(Y, X)}
    \subset
    \mc K(Y, X).
\end{align*}

An important example and the subject of this work is the case when
$X$ and $Y$ are Hilbert spaces. Then, $X^*\cong X$ and $Y^*\cong Y$.
This implies that if $\|\cdot\|$ is a reasonable crossnorm, then
$
    X\otimes_{\|\cdot\|} Y
    \subset
    \mc K(Y, X).
$
Since we can apply the singular value decomposition in
$\mc K(Y, X)$, this gives a representation for any
$v\in X\otimes_{\|\cdot\|} Y$,
$
    v=\sum_{k=1}^\infty\sigma_k \psi_k\otimes \phi_k,
$
for a decreasing non-negative sequence $\{\sigma_k\}_{k\in\N}$ and orthonormal
systems
$\{\psi_k\}_{k\in\N}\subset X$, $\{\phi_k\}_{k\in\N}\subset Y$.
Moreover, this provides us with the best low-rank approximations
\begin{align*}
    \|v-\sum_{k=1}^r\sigma_k x_k\otimes y_k\|_{\vee}=
    \sigma_{r+1}=\inf_{v_r=\sum_{k=1}^rx_k\otimes y_k}\|v-v_r\|_{\vee},
\end{align*}
for any $r\in\N$. The rank of $v$ is the smallest $r$ such that
$\sigma_{r+1}=0$ ($r=\infty$ if no such $r$ exists).
For the canonical norm on Hilbert tensor spaces we get
\begin{align*}
    \|v-\sum_{k=1}^r\sigma_k x_k\otimes y_k\|^2=
    \sum_{k=r+1}^\infty(\sigma_k)^2=
    \inf_{v_r=\sum_{k=1}^rx_k\otimes y_k}\|v-v_r\|^2.
\end{align*}
The corresponding space is the space of Hilbert Schmidt operators from
$Y$ to $X$,
$
    X\otimes_{\|\cdot\|} Y\cong\text{HS}(Y, X).
$
A typical example is the space of square integrable functions
$
    L^2(\Omega)\cong
    L^2(\Omega_1)\otimes_{\|\cdot\|_0} L^2(\Omega_2),
$
where we consider product domains
$\Omega=\Omega_1\times\Omega_2$.
The Sobolev space $H^1(\Omega)$, on the other hand,
is not equipped with the canonical norm. The Hilbert tensor space
that is the tensor product of one dimensional Sobolev spaces
with the canonical norm corresponds
to the space $H^1_{\text{mix}}(\Omega)$ of functions with mixed smoothness.
The singular value
decomposition does not apply in $H^1$ directly, which is the motivation for
this work.

The above does not extend to $d>2$ directly. However, we have
the following vector space isomorphism.

\begin{definition}[Matricisation]\label{eq:iso}
The matricisation $\mc M_\alpha$ with $\emptyset\neq\alpha\subsetneq\{
1,\ldots,d\}$ is the linear map defined by
    \begin{align*}
        \mc M_\alpha:\;&{}_a\bigotimes_{j=1}^d X_j\rightarrow
        \left({}_a\bigotimes_{j\in\alpha}X_j\right)\otimes_a
        \left({}_a\bigotimes_{j\in\alpha^c}X_j\right),\\
        &\bigotimes_{j=1}^dx_j\mapsto
        \left(\bigotimes_{j\in\alpha}x_j\right)\otimes
        \left(\bigotimes_{j\in\alpha^c}x_j\right),
    \end{align*}
    where the definition can be extended to any $x\in
    {}_a\bigotimes_{j=1}^d X_j $ by linearity. Moreover,
    this definition can be extended to topological tensors, if the
    norms in the domain and image of $\mc M_\alpha$ are compatible, i.e.,
    $\mc M_\alpha$ and $\mc M^{-1}_\alpha$ are continuous.
\end{definition}

\begin{definition}[HOSVD]\label{def:hosvd}
    Let $\alpha$ be as in \eqref{eq:iso} and let\\
    $V= {}_{\|\cdot\|}\bigotimes_{j=1}^d X_j$, where all $X_j$ are Hilbert
    spaces and $\|\cdot\|$ is the canonical norm. 
    Then, $\mc M_\alpha$ is an linear isometric isomorphism from $V$
    to the Hilbert tensor space
$
        \left({}_a\bigotimes_{j\in\alpha}X_j\right)\otimes_{\|\cdot\|_\alpha}
        \left({}_a\bigotimes_{i\in\alpha^c}X_i\right),
$
    endowed with the canonical norm $\|\cdot\|_\alpha$. Thus, we can apply
    SVD for any $\alpha$. Set $\alpha=\{j\}$ and let
    $\{\psi^j_k\}_{k\in\N}\subset X_j$
    denote the $X_j$-orthonormal singular functions obtained from the SVD of
    $\mc M_{\{j\}}(x)$. Then, there exists a unique sequence $\bs x\in\ell_2(\N^d)$
    such that
$
        x=\sum_{k_1,\ldots,k_d=1}^\infty \bs x_{k_1,\ldots,k_d}
        \psi_{k_1}^1\otimes\cdots\otimes\psi_{k_d}^d.
$
    This representation is called
    the higher-order singular value decomposition\\
    (HOSVD) of $x$. 
    \end{definition}
Other types of decompositions can be obtained by considering SVDs of\\
$\alpha$-matricisations for all $\alpha$ in a dimension partition
tree over $\{1,\cdots,d\}$. These decompositions are called hierarchical HOSVDs.
For details and precise definitions see \cite[Sections 8.3, 11.3]{HB}.

The approximation obtained by truncating the HOSVD is not optimal anymore but
rather quasi-optimal, as recalled in the following theorem. The proof can be found in
\cite[Theorem 10.3]{HB}.
\begin{theorem}[HOSVD truncation]\label{th:hosvd}
    In the setting of Definition
    \cref{def:hosvd}, let $\bs r:=(r_1,\ldots, r_d) \in \mathbb{N}^d$
    and let $P^j_{r_j}$ be the orthogonal
    projection from $X_j$ onto
$
        U^j_{r_j}(x)=\linspan{\psi^j_k:1\leq k\leq r_j},
$
    where $\psi_k^j\in X_j$ are the singular functions obtained via the HOSVD. 
    Then $x_{\bs r} := \mc P_{\bs r} x$, with 
   $ \mc P_{\bs r} =  \bigotimes_{j=1}^d P_{r_j}^j x$, is called the truncated HOSVD with multilinear (Tucker) rank 
    $\bs r$, and the  truncation error satisfies
    \begin{align*}
        \|x-x_{\bs r}\|^2\leq
        \sum_{j=1}^d\sum_{i=r_j+1}^\infty (\sigma_i^j)^2\leq d
        \inf_{v\in \mc T_{\bs r}(V)}\|x-v\|^2,
    \end{align*}
    where $\{\sigma_i^j\}_{i\in\N}$ are the singular values of $\mc M_{\{j\}}(x).$
\end{theorem}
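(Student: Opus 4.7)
The plan is to decompose the error $x - x_{\bs r}$ as a telescoping sum of $d$ pairwise orthogonal terms, each controlled by a one-factor SVD truncation. Let $Q_j$ denote the orthogonal projection on $V$ obtained by tensoring $P^j_{r_j}$ with identities on all other factors. Because the $Q_j$ act on disjoint tensor slots of the canonical Hilbert tensor product, they commute, are each self-adjoint orthogonal projections on $V$ with $\|Q_j\|\le 1$, and satisfy $\mc P_{\bs r}=Q_1Q_2\cdots Q_d$. I would then invoke the standard telescoping identity
\begin{align*}
    I - Q_1 Q_2 \cdots Q_d \;=\; \sum_{j=1}^d Q_1 \cdots Q_{j-1}\,(I - Q_j),
\end{align*}
with the convention that the empty product for $j=1$ equals $I$.

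The key intermediate step is to verify pairwise orthogonality of the summands $v_j := Q_1\cdots Q_{j-1}(I-Q_j)x$. For $j<k$ the projector $Q_j$ appears in the leading block of $v_k$, while the factor $(I-Q_j)$ appears in $v_j$; using self-adjointness and commutativity one moves $Q_j$ across the inner product and reduces to $Q_j(I-Q_j)=0$, so $\dpair{v_j}{v_k}=0$. By Pythagoras and the contractivity of each $Q_i$ with $i\neq j$ (which commutes with $I-Q_j$), this yields
\begin{align*}
    \|x-x_{\bs r}\|^2 \;=\; \sum_{j=1}^d \|v_j\|^2 \;\le\; \sum_{j=1}^d \|(I-Q_j)x\|^2.
\end{align*}
The matricisation $\mc M_{\{j\}}$ is an isometric isomorphism under which $(I-Q_j)x$ corresponds to the residual of truncating the SVD of $\mc M_{\{j\}}(x)$ after $r_j$ terms, so $\|(I-Q_j)x\|^2=\sum_{i>r_j}(\sigma_i^j)^2$, giving the first inequality.

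For the second inequality, fix $v\in \T(V)$. By definition $\dim \Umin_j(v)\le r_j$ for every $j$, hence $\mc M_{\{j\}}(v)$ has rank at most $r_j$ as an operator. Optimality of the SVD truncation in the Hilbert-Schmidt (canonical) norm therefore gives $\|x-v\|^2 = \|\mc M_{\{j\}}(x)-\mc M_{\{j\}}(v)\|^2 \ge \sum_{i>r_j}(\sigma_i^j)^2$ for each $j$; summing over $j$ and taking the infimum over $v\in\T(V)$ produces the factor $d$ on the right-hand side.

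The principal subtlety is the orthogonality of the telescoping summands. It is not a generic feature of commuting projections but depends on the canonical norm being a uniform crossnorm and on the self-adjointness of each $Q_j$ on the Hilbert tensor space $V$. This is precisely the structural property that the $H^1$-norm fails to satisfy, so while the proof in the canonical setting is short and essentially a one-factor-at-a-time reduction, the adaptation of such arguments is what motivates the more delicate analysis of the following sections.
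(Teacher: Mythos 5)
Your proof is correct and follows the standard telescoping-plus-orthogonality argument; the paper does not spell out a proof of this theorem but defers to \cite[Theorem 10.3]{HB}, which proceeds in essentially the same way (telescoping $I-Q_1\cdots Q_d$, Pythagoras for the summands, contractivity of the remaining projections, and Eckart--Young applied to each matricisation for the factor $d$). The one step worth checking carefully is the pairwise orthogonality of the telescoping summands, and your reduction via commutativity and self-adjointness to $Q_j(I-Q_j)=0$ handles it correctly.
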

Similar statements can be obtained for the hierarchical HOSVD, with different
constants \cite{Lars, HBSEMA}.

\section{Low-Rank Approximations in $H^1$}\label{sec:approx}
Before we continue with our analysis of low-rank approximations, we clarify
what is meant by an algebraic tensor in $H^1(\Omega)$. So far we defined
algebraic tensors only on tensor product spaces. In the case
of intersection spaces there are several candidates, since there are
multiple tensor product spaces involved.
As the following lemma shows, all possible choices lead to the same
algebraic tensor space, as long as we require $H^1$ regularity.

\begin{lemma}[{\cite[Proposition 4.104]{HB}}]\label{lemma:tensalg}
    \begin{align*}
        {}_a\bigotimes_{j=1}^d L^2(\Omega_j)\bigcap
        H^1(\Omega)= {}_a\bigotimes_{j=1}^d H^1(\Omega_j).
    \end{align*}
\end{lemma}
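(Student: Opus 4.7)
The inclusion ${}_a\bigotimes_{j=1}^d H^1(\Omega_j) \subset {}_a\bigotimes_{j=1}^d L^2(\Omega_j) \cap H^1(\Omega)$ is immediate: every finite sum $\sum_k \bigotimes_{j} u_j^k$ with $u_j^k \in H^1(\Omega_j) \subset L^2(\Omega_j)$ lies trivially in the algebraic $L^2$-tensor product, and it also belongs to $H^1_{\mix}(\Omega)$, hence to $H^1(\Omega)$, via the identification of $H^1_{\mix}$ as the canonical Hilbert tensor product of the $H^1(\Omega_j)$'s recalled in Section \cref{sec:sobolev}. All the work is in the reverse inclusion.

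For the nontrivial direction, fix $u \in {}_a\bigotimes_{j=1}^d L^2(\Omega_j) \cap H^1(\Omega)$ and write $u = \sum_{k=1}^r \bigotimes_{j=1}^d u_j^k$ with $u_j^k \in L^2(\Omega_j)$. By an elementary reduction (iteratively removing linear dependencies one direction at a time) I may assume this representation is \emph{reduced}: for each $j$ the family $\{u_j^k\}_{k=1}^r$ is linearly independent in $L^2(\Omega_j)$. Under this normalization the $L^2$-minimal subspace in direction $j$ of Definition \cref{def:minsub} is exactly $\Umin_j(u) = \linspan{u_j^k : 1 \le k \le r}$, so the goal reduces to showing $\Umin_j(u) \subset H^1(\Omega_j)$ for every $j$.

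The key step is the following. Fix $j$, a contraction $\varphi = \bigotimes_{k \neq j} \varphi_k$ with $\varphi_k \in L^2(\Omega_k)^*$ identified via Riesz with $g_k \in L^2(\Omega_k)$, and set $f := \varphi(u) \in L^2(\Omega_j)$. For any $\psi \in C_c^\infty(\Omega_j)$ the test function $\Psi := \psi \otimes \bigotimes_{k \neq j} g_k$ belongs to $L^2(\Omega)$ with $\partial_j \Psi \in L^2(\Omega)$, and approximating each $g_k$ by compactly supported smooth functions one extends the $H^1$-weak-derivative identity from $C_c^\infty(\Omega)$ to $\Psi$:
\begin{align*}
    \int_{\Omega_j} f \,\partial_j \psi \, dx_j
    = \int_\Omega u \,\partial_j \Psi \, dx
    = - \int_\Omega (\partial_j u)\, \Psi \, dx
    = - \int_{\Omega_j} \varphi(\partial_j u) \,\psi \, dx_j.
\end{align*}
Hence $f$ has a weak $j$-th derivative $\partial_j f = \varphi(\partial_j u) \in L^2(\Omega_j)$, so $f \in H^1(\Omega_j)$. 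Running this over all contractions $\varphi$ yields $\Umin_j(u) \subset H^1(\Omega_j)$, so in the reduced representation each $u_j^k$ already lies in $H^1(\Omega_j)$, giving $u \in {}_a\bigotimes_{j=1}^d H^1(\Omega_j)$.

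The principal obstacle is precisely the integration-by-parts identity above: one must justify that the weak-derivative relation for $u \in H^1(\Omega)$ extends from $C_c^\infty(\Omega)$ to tensor-product test functions whose non-$j$ factors are merely $L^2$. This is handled by a standard density argument -- approximate each $g_k$ in $L^2(\Omega_k)$ by $C_c^\infty(\Omega_k)$, form tensor products, and pass to the limit in $L^2(\Omega)$ using that both $u$ and $\partial_j u$ are in $L^2(\Omega)$ -- but it is the single point where the $H^1$-regularity of $u$ is actually used.
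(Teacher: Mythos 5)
Your key analytic step is sound and takes a genuinely different route from the paper's. You show that every contraction $f=\varphi(u)$ with $\varphi=\bigotimes_{k\neq j}\varphi_k$, $\varphi_j=\id_j$, has weak derivative $\partial_j f=\varphi(\partial_j u)\in L^2(\Omega_j)$ by integrating by parts against tensor-product test functions and passing to the limit from $C_c^\infty$ factors; this uses only $u,\partial_j u\in L^2(\Omega)$ and is an elementary, self-contained PDE argument. The paper instead uses that $u$ lies in the $\|\cdot\|_{e_j}$-closure of $H^1(\Omega_j)\otimes_a\bigl({}_a\bigotimes_{i\neq j}L^2(\Omega_i)\bigr)$, approximates $u$ by elementary tensors with $H^1(\Omega_j)$ left factors, and exploits continuity of $\id_j\otimes\varphi$ with respect to the reasonable crossnorm $\|\cdot\|_{e_j}$ to conclude that the left factors are $H^1$-limits of $H^1$ functions. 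Both routes establish that the directional minimal subspaces lie in $H^1(\Omega_j)$; yours is more hands-on, the paper's recycles the tensor-norm machinery it has already set up.

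There is, however, a genuine gap in how you organize the conclusion for $d\geq 3$. A single ``reduced'' representation $u=\sum_{k=1}^r\bigotimes_{j=1}^d u_j^k$ in which $\{u_j^k\}_{k=1}^r$ is linearly independent for \emph{every} $j$ simultaneously does not exist in general: as you yourself note, such a representation forces $\dim U^j(u)=r$ for all $j$, so it can only exist when all minimal subspaces have equal dimension. The tensor $u=a\otimes b\otimes c+a\otimes b'\otimes c'$ (with $\{b,b'\}$ and $\{c,c'\}$ independent) has $\dim U^1(u)=1$ but $\dim U^2(u)=\dim U^3(u)=2$ and admits no such representation; moreover the proposed iteration does not terminate as claimed, since eliminating a dependency in direction $j$ for $d\geq3$ leaves complementary factors that are sums of elementary $(d-1)$-tensors, so re-expanding can increase the term count again. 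The repair is cheap: either perform the reduction one direction at a time on the matricisation $L^2(\Omega_j)\otimes_a\bigl({}_a\bigotimes_{i\neq j}L^2(\Omega_i)\bigr)$, where the two-sided independence normalization of \cite[Lemma 3.13]{HB} is valid, conclude $u\in H^1(\Omega_j)\otimes_a\bigl({}_a\bigotimes_{i\neq j}L^2(\Omega_i)\bigr)$ for each $j$, and intersect via \cite[Lemma 6.11]{HB} (this is exactly the paper's structure); or keep your argument as is, observe that it already yields $U^j(u)\subset H^1(\Omega_j)$ for all $j$ without any normalization of the representation, and invoke $u\in{}_a\bigotimes_{j=1}^d U^j(u)$, which holds for every algebraic tensor.
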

\begin{proof}
    To show how the algebraic tensors in $L^2(\Omega)$ inherit $H^1$
    regularity, we detail the proof in a more rigorous way than in
    \cite[Proposition 4.104]{HB}.
    The inclusion ``$\supset$'' is obvious. For the inclusion ``$\subset$'',
    let 
    $u\in {}_a\bigotimes_{j=1}^dL^2(\Omega_j)\bigcap H^1(\Omega)$.
    For a fixed $1\leq k\leq d$, we have
    \begin{align}\label{eq:h1reg}
        u\in L^2(\Omega_k)\otimes_a\left({}_a\bigotimes_{j\neq k}L^2(\Omega_j)
        \right)
        \cap\clos{\|\cdot\|_{e_k}}
        {H^1(\Omega_k){}_a\bigotimes_{j\neq k}L^2(\Omega_j)}.
    \end{align}
    Then, there is a number $r\in\N$ and functions
    $\{v_l\}_{l=1}^r\subset L^2(\Omega_k)$,\\
    $\{w_l\}_{l=1}^r\subset{}_a\bigotimes_{j\neq k}L^2(\Omega_j)$
    such that
$
        u=\sum_{l=1}^r v_l\otimes w_l.
$
    By \cite[Lemma 3.13]{HB}, w.l.o.g., we can assume $\{v_l\}_{l=1}^r$ and
    $\{w_l\}_{l=1}^r$ to be linearly independent. Thus, we can choose a dual
    basis $\{\varphi_l\}_{l=1}^r\subset
    \left({}_a\bigotimes_{j\neq k}L^2(\Omega_j)\right)^*$ such that
    $\varphi_k(w_l)=\delta_{kl}$.
    Note that the mapping
    \[
        \id_k\otimes\varphi_l:
        H^1(\Omega_k)\otimes_{\|\cdot\|_{e_k}}\left({}_a\bigotimes_{j\neq k}L^2(\Omega_j)
        \right)\rightarrow
        H^1(\Omega_k)
    \]
    is continuous for all $1\leq l\leq r$, since $\|\cdot\|_{e_k}$ is a
    reasonable crossnorm.

    Moreover, by \eqref{eq:h1reg},
    there exist
    $\{v_l^m\otimes w_l^m\}_{\substack{1\leq l\leq m,\\
    1\leq m<\infty}}\subset
    H^1(\Omega_k)\otimes_a\left({}_a\bigotimes_{j\neq k}L^2(\Omega_j)\right)$
    such that
    \[
        \lim_{m\rightarrow\infty}
        \left\|u-\sum_{l=1}^mv_l^m\otimes w_l^m\right\|_{e_k}=0.
    \]
    Thus, since $\id_k\otimes\varphi_i$ is continuous
    \begin{align*}
        0&=\lim_{m\rightarrow\infty}\left\|\id_k\otimes\varphi_i
        \left(u-\sum_{l=1}^mv_l^m\otimes w_l^m\right)\right\|_1\\
        &=\lim_{m\rightarrow\infty}\left\|\sum_{l=1}^rv_l\varphi_i(w_l)-
        \sum_{l=1}^mv_l^m\varphi_i(w_l^m)\right\|_1\\
        &=\lim_{m\rightarrow\infty}\left\|v_i-
        \sum_{l=1}^mv_l^m\varphi_i(w_l^m)\right\|_1.
    \end{align*}
    And thus
$
        v_i\in\clos{\|\cdot\|_1}{\linspan{v_l^m:1\leq l\leq m,\;
        1\leq m<\infty}}\subset H^1(\Omega_k).
$
    Since $i$ and $k$ were chosen arbitrarily, this shows
$
        u\in \bigcap_{k=1}^d
        H^1(\Omega_k)\otimes_a\left({}_a\bigotimes_{j\neq k}L^2(\Omega_j)
        \right).
$
    Finally, by \cite[Lemma 6.11]{HB}
$
        \bigcap_{k=1}^d
        H^1(\Omega_k)\otimes_a\left({}_a\bigotimes_{j\neq k}L^2(\Omega_j)
        \right)
        ={}_a\bigotimes_{j=1}^d H^1(\Omega_j).
$
    This completes the proof.
\end{proof}

\subsection{Existence of Low-Rank Approximations}
First, we address the question of existence of low-rank approximations
of a function $u\in H^1(\Omega)$. Since for $d>2$ and $r>1$
the set $\mc R_r(V)$ is not closed even for the case
$V=L^2(\Omega)$ (see \cite[Section 9.4.1]{HB}), we only consider
Tucker formats.

In analogy to
Definition \cref{def:minsub}, for $u\in H^1(\Omega)$ we define the subspace
\begin{align}\label{eq:subsph1}
    U^j(u) := \linspan{\varphi(u):\varphi=\bigotimes_{k=1}^d\varphi_k,\;
    \varphi_j=\id_j,\;\varphi_k\in\left(L^2(\Omega_k))^*,\;k\neq j\right)}.
\end{align}
Note that for $u\in {}_a\bigotimes_{j=1}^d H^1(\Omega_j)$, $U^j(u)$
is a closed
subspace of $H^1(\Omega_j)$ and the definition coincides with the case
$u\in {}_a\bigotimes_{j=1}^d L^2(\Omega_j)$. Since in this case $u$ can be
written as
$
    u=\sum_{k=1}^r\bigotimes_{i=1}^dv_k^i,
$
for some $r\in\N$, any $\varphi$ from \eqref{eq:subsph1} applied to $u$ yields
$
    \varphi(u)=\sum_{k=1}^rv_k^j\left(\prod_{i\neq j}\varphi_i(v_k^i)\right).
$
And thus
$
   U^j(u)\subset\linspan{v_k^j:1\leq k\leq r}.
$
The subspace $U^j(u)\subset H^1(\Omega_j)$ is finite dimensional and is
closed in any norm.

Thus, together with Lemma \cref{lemma:tensalg}, we can define the Tucker manifold for
$\r=(r_1,\ldots,r_d)$ and $V={}_a\bigotimes_{j=1}^d H^1(\Omega_j)$ in the same
way as in Definition \cref{def:formatsTucker}. This set remains weakly
closed in $H^1(\Omega)$. To show this, we first require the following lemma.
\begin{lemma}\label{lemma:tuckermanifold}
    $\T\left({}_a\bigotimes_{j=1}^d H^1(\Omega_j)\right)
    ={}_a\bigotimes_{j=1}^d H^1(\Omega_j)\cap
    \T\left({}_a\bigotimes_{j=1}^d L^2(\Omega_j)\right)$.
\end{lemma}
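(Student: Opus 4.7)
The plan is to prove the two inclusions and observe that both reduce to a single identification: for any algebraic tensor $u\in{}_a\bigotimes_{j=1}^d H^1(\Omega_j)$, the $j$-th minimal subspace is the same finite dimensional subspace of $H^1(\Omega_j)\subset L^2(\Omega_j)$ whether one forms it in the $L^2$ or in the $H^1$ ambient space. Once this is verified, the Tucker dimension bound $\dim U^{\min}_j(u)\leq r_j$ transfers freely between the two settings.

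The inclusion ``$\subseteq$'' is the easy direction: if $u\in \T\bigl({}_a\bigotimes_{j=1}^d H^1(\Omega_j)\bigr)$, then $u$ certainly lies in ${}_a\bigotimes_{j=1}^d H^1(\Omega_j)$, and by the continuous embedding $H^1(\Omega_j)\hookrightarrow L^2(\Omega_j)$ it also lies in ${}_a\bigotimes_{j=1}^d L^2(\Omega_j)$. The only substance is that $\dim U^{\min}_j(u)\leq r_j$ remains valid when $u$ is regarded as an $L^2$-algebraic tensor. For ``$\supseteq$'', Lemma \cref{lemma:tensalg} (or rather the inclusion ${}_a\bigotimes_{j=1}^d H^1(\Omega_j)\subset{}_a\bigotimes_{j=1}^d L^2(\Omega_j)\cap H^1(\Omega)$) gives membership in the correct algebraic tensor space for free, so again only the invariance of $\dim U^{\min}_j$ is at stake.

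To prove that invariance, I would use \cite[Lemma 3.13]{HB} to pick a representation $u=\sum_{k=1}^r\bigotimes_{i=1}^d v_k^i$ with $v_k^i\in H^1(\Omega_i)$ and with $\{v_k^i\}_{k=1}^r$ linearly independent for every $i$. Since $H^1(\Omega_i)\hookrightarrow L^2(\Omega_i)$ is an injection, linear independence in $H^1$ is equivalent to linear independence in $L^2$, so Hahn-Banach in $L^2(\Omega_i)$ yields a biorthogonal family $\{\varphi_i^l\}_{l=1}^r\subset (L^2(\Omega_i))^\ast$ with $\varphi_i^l(v_m^i)=\delta_{lm}$. Applying the functionals $\bigotimes_{k\neq j}\varphi_k^{l_k}$ to $u$ recovers each $v_k^j$, while applying any functional of the form $\bigotimes_{k\neq j}\varphi_k$ with $\varphi_k\in (L^2(\Omega_k))^\ast$ produces an element of $\linspan{v_k^j:1\leq k\leq r}$. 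Thus
\begin{equation*}
U^{\min}_j(u)=\linspan{v_k^j:1\leq k\leq r},
\end{equation*}
a finite dimensional space which, being finite dimensional, needs no closure and in particular is norm independent. The same span is produced whether we insist the testing functionals be in $(L^2(\Omega_k))^\ast$ or in $(H^1(\Omega_k))^\ast$, since the former is continuously included in the latter and already suffices to separate the $v_k^i$.

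With that identification in hand, both inclusions are immediate: the common value of $\dim U^{\min}_j(u)$ is bounded by $r_j$ on either side. The main obstacle is really only one of bookkeeping, namely making sure that equation (2.15) (which uses $(L^2)^\ast$-functionals) and Definition \cref{def:minsub} applied to the $L^2$-tensor space produce the same subspace for an algebraic tensor. This is the content of the biorthogonality argument above, and no genuine analysis beyond Hahn-Banach and \cite[Lemma 3.13]{HB} is required.
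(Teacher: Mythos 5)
Your overall strategy is the same as the paper's: both inclusions reduce to the observation that, for an algebraic tensor $u\in{}_a\bigotimes_{j=1}^d H^1(\Omega_j)$, the minimal subspaces are finite-dimensional subspaces of $H^1(\Omega_j)$ that do not depend on whether $u$ is viewed in the $L^2$- or the $H^1$-setting, so the condition $\dim \Umin_j(u)\leq r_j$ transfers verbatim. The paper obtains this from the remark preceding the lemma (the inclusion $U^j(u)\subset\linspan{v_k^j:1\leq k\leq r}$ for an arbitrary representation, hence finite dimensionality and closedness in any norm) together with \cite[Lemma 6.11]{HB}; you obtain it from a biorthogonality argument.

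There is, however, a genuine flaw in your key step when $d>2$. You invoke \cite[Lemma 3.13]{HB} to choose a single representation $u=\sum_{k=1}^r\bigotimes_{i=1}^d v_k^i$ in which $\{v_k^i\}_{k=1}^r$ is linearly independent for \emph{every} $i$ simultaneously. Such a representation need not exist: if it did, your own biorthogonality argument would give $\dim \Umin_j(u)=r$ for every $j$, i.e., all mode ranks would equal the number of terms, whereas there are tensors already in $(\R^2)^{\otimes 3}$, e.g.\ $e_1\otimes e_1\otimes e_2+e_1\otimes e_2\otimes e_1+e_2\otimes e_1\otimes e_1$, whose canonical rank strictly exceeds every mode rank. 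The cited lemma is a statement about order-two tensors. The repair is routine and is what the paper does elsewhere (e.g., in the proof of Lemma \cref{lemma:tensalg}): fix $j$, matricise $u$ as an element of $H^1(\Omega_j)\otimes_a\bigl({}_a\bigotimes_{i\neq j}L^2(\Omega_i)\bigr)$, and run the order-two biorthogonality argument there, with a representation that is allowed to depend on $j$. Alternatively, note that the lemma needs much less than the exact identification $\Umin_j(u)=\linspan{v_k^j:1\leq k\leq r}$: the inclusion $U^j(u)\subset\linspan{v_k^j:1\leq k\leq r}$, valid for \emph{any} representation with no independence assumption, already yields finite dimensionality, hence closedness in both $\|\cdot\|_0$ and $\|\cdot\|_1$ and equality of the two closures, which is all that is used to pass the dimension bound between the two Tucker formats.
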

\begin{proof}
    The inclusion ``$\subset$'' is trivial.

    For the other inclusion, assume $v\in
    {}_a\bigotimes_{j=1}^d H^1(\Omega_j)\cap
    \T\left({}_a\bigotimes_{j=1}^d L^2(\Omega_j)\right)$. Since
    $v\in {}_a\bigotimes_{j=1}^d H^1(\Omega_j)$, $U^j(v)\subset H^1(\Omega_j)$
    and by \cite[Lemma 6.11]{HB}
    $v\in {}_a\bigotimes_{j=1}^d U^j(v)\subset
    {}_a\bigotimes_{j=1}^d L^2(\Omega_j)$.
    In particular, since $v\in
    \T\left({}_a\bigotimes_{j=1}^d L^2(\Omega_j)\right)$,
    $\dim U^j(v)\leq r_j$ for all $1\leq j\leq d$. Hence,
    $v\in\T\left({}_a\bigotimes_{j=1}^d H^1(\Omega_j)\right)$.
    This completes the proof.
\end{proof}

\begin{theorem}
    $\T\left({}_a\bigotimes_{j=1}^d H^1(\Omega_j)\right)$ is
    weakly closed and therefore proximinal in $H^1(\Omega)$.
\end{theorem}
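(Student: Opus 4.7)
The plan is to reduce weak closedness in $H^1(\Omega)$ to the already known weak closedness of the Tucker manifold in the Hilbert tensor space $L^2(\Omega)$, and then to use Lemma \cref{lemma:tuckermanifold} as the bridge between the two settings. Proximinality will then follow from weak closedness together with reflexivity of $H^1(\Omega)$.

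More concretely, let $\{v_n\}\subset\T\left({}_a\bigotimes_{j=1}^d H^1(\Omega_j)\right)$ with $v_n\rightharpoonup v$ weakly in $H^1(\Omega)$. Since the embedding $H^1(\Omega)\hookrightarrow L^2(\Omega)$ is continuous and linear, weak convergence transfers: $v_n\rightharpoonup v$ weakly in $L^2(\Omega)\cong{}_{\|\cdot\|_0}\bigotimes_{j=1}^d L^2(\Omega_j)$. By Lemma \cref{lemma:tuckermanifold}, each $v_n$ lies in $\T\left({}_a\bigotimes_{j=1}^d L^2(\Omega_j)\right)$, which is known to be weakly closed in the Hilbert tensor space $L^2(\Omega)$ (this is the standard weak closedness of the Tucker/HOSVD format in Hilbert tensor spaces, see, e.g., \cite[Theorem 8.6]{HB}). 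Hence $v\in\T\left({}_a\bigotimes_{j=1}^d L^2(\Omega_j)\right)$, and in particular $v\in{}_a\bigotimes_{j=1}^d L^2(\Omega_j)$. Combined with $v\in H^1(\Omega)$ (which holds by construction), Lemma \cref{lemma:tensalg} yields $v\in{}_a\bigotimes_{j=1}^d H^1(\Omega_j)$, and a second application of Lemma \cref{lemma:tuckermanifold} places $v$ in $\T\left({}_a\bigotimes_{j=1}^d H^1(\Omega_j)\right)$, as required.

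For proximinality, given $u\in H^1(\Omega)$, take a minimizing sequence $\{v_n\}\subset\T\left({}_a\bigotimes_{j=1}^d H^1(\Omega_j)\right)$ with $\|u-v_n\|_1\to\inf_{w\in\T}\|u-w\|_1$. This sequence is bounded in $H^1(\Omega)$, hence, by reflexivity, has a subsequence $v_{n_k}\rightharpoonup v^\star$ weakly in $H^1(\Omega)$. Weak closedness gives $v^\star\in\T\left({}_a\bigotimes_{j=1}^d H^1(\Omega_j)\right)$, and weak lower semicontinuity of the $H^1$-norm yields $\|u-v^\star\|_1\leq\liminf_k\|u-v_{n_k}\|_1$, so $v^\star$ realizes the infimum.

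The step I expect to require the most care is the transfer of weak convergence in the Hilbert tensor space $L^2(\Omega)$ into the statement that the weak limit genuinely lies in $\T\left({}_a\bigotimes_{j=1}^d L^2(\Omega_j)\right)$, i.e.\ that $\dim U^j(v)\leq r_j$ for each $j$. This is the content of weak closedness of the Tucker format in Hilbert tensor spaces and is the nontrivial ingredient that we are importing from \cite{HB}; the key fact behind it is the weak lower semicontinuity $\dim U^j(v)\leq\liminf_n\dim U^j(v_n)$, which in turn relies on the definition of $U^j(v)$ through evaluation against continuous tensorised functionals and on the compatibility of those functionals with weak convergence in the Hilbert tensor product. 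Everything else is bookkeeping between the $H^1$ and $L^2$ realisations, handled cleanly by the two preceding lemmas.
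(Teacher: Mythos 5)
Your proposal is correct and follows essentially the same route as the paper: transfer the weak convergence from $H^1(\Omega)$ to $L^2(\Omega)$, use Lemma \cref{lemma:tuckermanifold} to place the sequence in the $L^2$ Tucker set, invoke the weak lower semicontinuity of $\dim U^j(\cdot)$ (the paper cites \cite[Theorem 6.24]{HB} directly, you import it packaged as weak closedness of the Tucker format in Hilbert tensor spaces), and conclude proximinality from reflexivity. If anything, your write-up is slightly more explicit than the paper's at the final step, where Lemma \cref{lemma:tensalg} and a second application of Lemma \cref{lemma:tuckermanifold} are needed to return from the $L^2$ Tucker set to the $H^1$ one.
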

\begin{proof}
    Let $\{v_n\}_{n\in\N}\subset
    \T\left({}_a\bigotimes_{j=1}^d H^1(\Omega_j)\right)$ satisfy
    $v_n\rightharpoonup v\text{ in }H^1(\Omega)$.
    Since $\left(L^2(\Omega)\right)^*\subset \left(H^1(\Omega)\right)^*$,
    $v_n\rightharpoonup v\text{ in }L^2(\Omega)$.
    By Lemma \cref{lemma:tuckermanifold},
    $\{v_n\}_{n\in\N}\subset
    \T\left({}_a\bigotimes_{j=1}^d L^2(\Omega_j)\right)$, i.e.,
    $\dim U^j(v_n)\leq r_j$ for all $1\leq j\leq d$.
    By \cite[Theorem 6.24]{HB},
    $\dim U^j(v)\leq\liminf_{n\rightarrow\infty}\dim U^j(v)\leq r_j$,
    and thus
    $v\in\T\left({}_a\bigotimes_{j=1}^d H^1(\Omega_j)\right)$.
    Since $H^1(\Omega)$ is a reflexive Banach space, the set
    $\T\left({}_a\bigotimes_{j=1}^d H^1(\Omega_j)\right)$ is proximinal.
\end{proof}

\subsection{Minimal Subspaces}
The subspaces from \eqref{eq:subsph1} inherit $H^1$ regularity.
\begin{lemma}\label{lemma:h1reg}
    For $u\in H^1(\Omega)$,
$
        U^j(u)\subset H^1(\Omega_j).
$
\end{lemma}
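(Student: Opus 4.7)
The plan is to invoke the intersection-space identification $H^1(\Omega) \cong \bigcap_{k=1}^d H^{e_k}$ from \eqref{eq:sobolev}, which in particular gives $u \in H^{e_j}$. The point of working inside $H^{e_j}$ rather than $H^1(\Omega)$ itself is that $\|\cdot\|_{e_j}$ is the canonical (hence uniform, hence reasonable) crossnorm built from $\|\cdot\|_1$ on $H^1(\Omega_j)$ and $\|\cdot\|_0$ on each $L^2(\Omega_k)$, $k \neq j$, so the machinery of Section \ref{sec:prelim} applies cleanly.

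The first step is to observe that for any $\varphi = \bigotimes_{k \neq j} \varphi_k$ with $\varphi_k \in (L^2(\Omega_k))^*$, the partial contraction
\[
    \id_j \otimes \varphi : H^{e_j} \longrightarrow H^1(\Omega_j)
\]
is a continuous linear map, with operator norm controlled by $\prod_{k \neq j} \|\varphi_k\|_{(L^2(\Omega_k))^*}$. This is exactly the ``$\id_k \otimes \varphi_l$'' continuity used inside the proof of Lemma \ref{lemma:tensalg}, and it follows by applying the reasonable crossnorm property of $\|\cdot\|_{e_j}$ (together with that of its dual) factor by factor on the $k \neq j$ components.

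The second step, which is where I expect the only subtlety, is compatibility: the element produced by this $H^{e_j}$-valued contraction must be identified with the element $\varphi(u) \in L^2(\Omega_j)$ appearing in the definition \eqref{eq:subsph1}, which is computed a priori via the $L^2$ identification $u \in L^2(\Omega) \cong {}_{\|\cdot\|_0}\bigotimes_{j=1}^d L^2(\Omega_j)$. I would resolve this by a density argument: the two contractions obviously agree on the algebraic tensor product ${}_a\bigotimes_{k=1}^d H^1(\Omega_k)$; they are both continuous, one from $H^{e_j}$ into $H^1(\Omega_j) \hookrightarrow L^2(\Omega_j)$, the other from $L^2(\Omega)$ into $L^2(\Omega_j)$; and the algebraic tensor product is dense in $H^{e_j}$, while the embedding $H^{e_j} \hookrightarrow L^2(\Omega)$ is continuous. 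Approximating $u$ by algebraic tensors in $\|\cdot\|_{e_j}$ therefore forces the two outputs to coincide.

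Combining the two steps, every generator of $U^j(u)$ in \eqref{eq:subsph1} lies in $H^1(\Omega_j)$. Since $U^j(u)$ is defined as their linear span and $H^1(\Omega_j)$ is a linear subspace, $U^j(u) \subset H^1(\Omega_j)$, as claimed. The essential work is already packaged into the reasonable-crossnorm arguments of Lemma \ref{lemma:tensalg}, so the proof should be quite short once the intersection-space viewpoint is invoked.
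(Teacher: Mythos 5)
Your proposal is correct and follows essentially the same route as the paper: the core is the continuity of the partial contraction $\id_j\otimes\varphi$ via the reasonable-crossnorm bound $\|\varphi_{[j]}(v)\|_1\lesssim\|v\|_{\vee(e_j)}\lesssim\|v\|_{e_j}$ on elementary tensors, followed by a density extension (the paper extends from ${}_a\bigotimes_k H^1(\Omega_k)$ dense in $H^1(\Omega)$ using $\|\cdot\|_{e_j}\leq\|\cdot\|_1$, while you work inside $H^{e_j}$ directly). Your explicit second step, identifying the $H^{e_j}$-valued contraction with the a priori $L^2$-defined $\varphi(u)$ from \eqref{eq:subsph1}, is a compatibility check the paper leaves implicit, but it does not change the substance of the argument.
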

\begin{proof}
    Let $v\in {}_a\bigotimes_{j=1}^d H^1(\Omega_j)$ and
$
        \varphi_{[j]}:=\bigotimes_{k=1}^d\varphi_k,$ $
        \varphi_j=\id_j,$ $
        \varphi_k\in L^2(\Omega_k)^*,\;k\neq j.
$
    Clearly, $\varphi_{[j]}(v)\in H^1(\Omega_j)$. By \cite[Lemma 4.97]{HB}
    and \cite[Proposition 4.68]{HB}
    \begin{align*}
        \|\varphi_{[j]}(v)\|_1\lesssim\|v\|_{\vee(e_j)}\lesssim
        \|v\|_{e_j}.
    \end{align*}
    Thus, $\varphi_{[j]}: \left({}_a\bigotimes_{j=1}^d H^1(\Omega_j),
    \|\cdot\|_1\right)
    \rightarrow H^1(\Omega_j)$ is a continuous linear mapping. Since\\
    ${}_a\bigotimes_{j=1}^d H^1(\Omega_j)$ is dense in $H^1(\Omega)$,
    $\varphi_{[j]}$ can be uniquely extended to a bounded linear mapping
    on $H^1(\Omega)$ with the same operator norm, i.e.,
    $\varphi_{[j]}(v)\in H^1(\Omega_j)$ is well defined for
    $v\in H^1(\Omega)$ and the statement follows.
\end{proof}

Before we proceed, we would like to clarify that there are several possible
definitions for minimal subspaces when considering $u\in H^1(\Omega)$.
First, there are two possible choices for the dual space leading to
\begin{align*}
    U_{a}^j(u)&:= \linspan{\varphi(u):\varphi=\bigotimes_{k=1}^d \varphi_k,\;
    \varphi_j=\id,\;\varphi_k\in\left(L^2(\Omega_k))^*,\;k\neq j\right)},\\
    U_{b}^j(u)&:= \linspan{\varphi(u):\varphi=\bigotimes_{k=1}^d \varphi_k,\;
    \varphi_j=\id,\;\varphi_k\in\left(H^1(\Omega_k))^*,\;k\neq j\right)},
\end{align*}
where clearly $U_a^j(u)=U^j(u)$.
Second, there are two possible choices for the completion norm, which overall
leads to four possible definitions
\begin{align*}
    U_{I}^j(u)&:=\clos{\|\cdot\|_0}{U^j_{a}(u)},\;
    U_{II}^j(u):=\clos{\|\cdot\|_1}{U^j_{a}(u)},\\
    U_{III}^j(u)&:=\clos{\|\cdot\|_0}{U^j_{b}(u)},\;
    U_{IV}^j(u):=\clos{\|\cdot\|_1}{U^j_{b}(u)}.
\end{align*}
The space $U_{I}^j(u)$ is the minimal subspace of $u$ as a function in
$L^2(\Omega)$.
For $d=2$,
$U_{II}^1(u)$ (resp. $U_{II}^2(u)$) is the minimal subspace
of $u$ as a function in $H^{(1,0)}$ (resp. $H^{(0,1)}$),
$U_{III}^1(u)$ (resp. $U_{III}^2(u)$) is the minimal subspace of
$u$ as a function in $H^{(0,1)}$ (resp. $H^{(1,0)}$) and
$U_{IV}^j(u)$ is the minimal subspace of $u$ as a function in
$H^1_\mix(\Omega)$.
Since we want to consider precisely $u\in H^1(\Omega)$, we consider
$u\in H^{(1,0)}$ and choose the
variant $U_{II}^1(u)$ for the left minimal subspace, and
$u\in H^{(0,1)}$ with the variant $U_{II}^2(u)$ for the right minimal subspace.
Analogously for $d>2$.

With the preceding lemma we may now define
$
    \Umin_j(u)=\clos{\|\cdot\|_1}{U_a^j(u)}\subset H^1(\Omega_j).
$
This space differs from $U^j(u)$
in case $u\not\in {}_a\bigotimes_{j=1}^d H^1(\Omega_j)$.
We want to check if property \eqref{eq:inmin} still holds for
$H^1$ functions. To this end, we require the following assumption.

\begin{assumption}\label{ass:pl2}
    Let $\mc P^j:H^1(\Omega_j)\rightarrow \Umin_j(u)$ be an orthogonal
    projection. We assume $\mc P^j$ is continuous in $L^2$
    \begin{align*}
        \sup_{\substack{v_j\in H^1(\Omega_j),\\v\neq 0}}\frac{\|\mc P^jv_j\|_0}
        {\|v\|_0}<\infty.
    \end{align*}
\end{assumption}

\begin{remark}
    We will frequently encounter Assumption \cref{ass:pl2} in the following
    sections. We will discuss sufficient conditions for this assumption
    to be satisfied in part II of this series.
    We will see that this assumption is not necessarily satisfied.
    In fact, we conjecture that there are functions $u\in H^1(\Omega)$ which
    do not satisfy the statement of Theorem \cref{thm:minsubs}.
    The proof of this, however, seems to be not trivial.
\end{remark}

\begin{proposition}\label{thm:minsubs}
    Let $u\in H^1(\Omega)$ and assume \cref{ass:pl2} is satisfied.
    Then, it holds
$
        u\in{}_{\|\cdot\|_1}\bigotimes_{j=1}^d\Umin_j(u).
$
\end{proposition}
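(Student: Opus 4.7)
The strategy is to build a bounded linear operator $P: H^1(\Omega) \to H^1(\Omega)$ that extends the algebraic tensor projection $\bigotimes_{j=1}^d \mc P^j$, show that $Pu = u$, and then conclude by $H^1$-density of algebraic tensors. The hard part is the $H^1$-boundedness of $P$; once that is established, the identity $Pu = u$ can be reduced to the corresponding (classical) statement in $L^2(\Omega)$, where the canonical Hilbert tensor structure gives the minimal subspaces property for free.

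For the boundedness, I first use Assumption \cref{ass:pl2} to extend each $\mc P^j$ continuously to $L^2(\Omega_j)$, and set $P := \bigotimes_{j=1}^d \mc P^j$ as an algebraic operator. To promote $P$ to a bounded operator on $H^1(\Omega)$, I exploit the intersection representation $H^1(\Omega) \cong \bigcap_{k=1}^d H^{e_k}$. Each $H^{e_k}$ carries the canonical (hence uniform) crossnorm of the Hilbert tensor space $H^1(\Omega_k) \otimes \bigotimes_{j \neq k} L^2(\Omega_j)$, and the factors of $P$ acting on it are $\mc P^k$ on $H^1(\Omega_k)$ (bounded, since it is an $H^1$-orthogonal projection) and $\mc P^j$ on $L^2(\Omega_j)$ for $j \neq k$ (bounded by Assumption \cref{ass:pl2}). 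The uniform crossnorm property then gives continuity of $P$ in each $\|\cdot\|_{e_k}$, hence continuity in the intersection norm on $H^1(\Omega)$.

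For $Pu = u$, I move the argument to $L^2(\Omega)$. Since $L^2(\Omega)$ is a canonical Hilbert tensor space, the classical minimal subspaces property yields $u \in {}_{\|\cdot\|_0}\bigotimes_{j=1}^d U^j_I(u)$ with $U^j_I(u) = \clos{\|\cdot\|_0}{U^j(u)}$. Denote the $L^2$-extension of $\mc P^j$ by $\tilde{\mc P}^j$. Since $\mc P^j$ is the identity on $U^j(u) \subset \Umin_j(u)$ and $U^j(u)$ is $L^2$-dense in $U^j_I(u)$, $L^2$-continuity forces $\tilde{\mc P}^j$ to be the identity on all of $U^j_I(u)$. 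Consequently $\bigotimes_{j=1}^d \tilde{\mc P}^j$ fixes every element of ${}_a\bigotimes_{j=1}^d U^j_I(u)$, and $L^2$-continuity together with the above inclusion gives $\bigl(\bigotimes_{j=1}^d \tilde{\mc P}^j\bigr) u = u$ in $L^2$. On the dense subspace ${}_a\bigotimes_{j=1}^d H^1(\Omega_j) \subset H^1(\Omega)$ (density used in the proof of Lemma \cref{lemma:h1reg}) the operators $P$ and $\bigotimes_{j=1}^d \tilde{\mc P}^j$ coincide, so their continuous extensions agree on $u$, giving $Pu = u$ in $H^1(\Omega)$.

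Finally, pick $u_n \in {}_a\bigotimes_{j=1}^d H^1(\Omega_j)$ with $u_n \to u$ in $\|\cdot\|_1$. Then $Pu_n \in {}_a\bigotimes_{j=1}^d \Umin_j(u)$ since each $\mc P^j$ has range $\Umin_j(u)$, and the $H^1$-continuity of $P$ yields $Pu_n \to Pu = u$ in $\|\cdot\|_1$, placing $u$ in the completion ${}_{\|\cdot\|_1}\bigotimes_{j=1}^d \Umin_j(u)$. The crucial role of Assumption \cref{ass:pl2} is precisely to bridge the $H^1$ factor in the distinguished slot of $H^{e_k}$ with the $L^2$ factors in the remaining slots, which is the only place where the hypothesis is used.
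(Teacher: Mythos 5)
Your proof is correct, and while it shares the same skeleton as the paper's (tensorize the orthogonal projections $P^j$ onto $\Umin_j(u)$, use Assumption \ref{ass:pl2} to control the off-diagonal factors in $L^2$, show the tensorized projection fixes $u$, conclude by density), the two halves are executed by genuinely different means. For the fixed-point identity $Pu=u$, the paper works direction by direction: it invokes \cite[Theorem 6.29]{HB} to place $u$ in $\bigcap_k \Umin_k(u)\otimes_{\|\cdot\|_{e_k}}\bigl({}_a\bigotimes_{j\neq k}L^2(\Omega_j)\bigr)$ and then runs a triangle-inequality argument in each $\|\cdot\|_{e_j}$; you instead pass to $L^2(\Omega)$, use the classical minimal-subspace property of the canonical Hilbert tensor space to get $\bigl(\bigotimes_j\tilde{\mc P}^j\bigr)u=u$ in $L^2$, and identify this with $Pu$ via agreement on the dense algebraic tensors --- which is legitimate since $Pu\in H^1(\Omega)$ and equality a.e.\ then upgrades to equality in $H^1$. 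For the final approximation step, the paper expands $u$ in a complete $H^1(\Omega)$-orthonormal (or Riesz) system of \emph{elementary tensor products} and pushes $\bigotimes_j P^j$ through that series, whereas you only need $H^1$-density of ${}_a\bigotimes_j H^1(\Omega_j)$ together with the global $H^1$-boundedness of $P$ (obtained from the intersection representation, the uniform crossnorm property of each $\|\cdot\|_{e_k}$, and Assumption \ref{ass:pl2}). Your route buys two things: it avoids the unproved existence of a tensor-product Riesz basis of $H^1(\Omega)$, and it makes the role of Assumption \ref{ass:pl2} completely explicit as the $H^1(\Omega)$-boundedness of $P$; the paper's route, in exchange, stays entirely within the $\|\cdot\|_{e_j}$-framework of \cite{HB} and never needs the $L^2$ detour. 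Both arguments use the hypothesis in exactly the same place, so neither is more general than the other.
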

\begin{proof}
    Since $u\in H^1(\Omega)$, by \eqref{eq:sobolev}, $u\in
    H^1(\Omega_j)\otimes_{\|\cdot\|_{e_j}}
    \left({}_a\bigotimes_{k\neq j}L^2(\Omega_k)\right)$. The space
    $\Umin_j(u)\subset H^1(\Omega_j)$ is a closed subspace of the
    Hilbert space $H^1(\Omega_j)$. Thus, $\Umin_j(u)\in\G(H^1(\Omega_j))$.
    Moreover, $\|\cdot\|_{e_j}$ is a uniform crossnorm. This holds for
    any $1\leq j\leq d$ and thus by
    \cite[Theorem 6.29]{HB} we obtain
    \begin{align*}
        u\in\bigcap_{k=1}^d
        \Umin_k(u)\otimes_{\|\cdot\|_{e_k}}
        \left({}_a\bigotimes_{j\neq k}L^2(\Omega_j)\right).
    \end{align*}

    Next, following the arguments of \cite[Theorem 6.28]{HB},
    consider the orthogonal projection
    $P^j:H^1(\Omega_j)\rightarrow \Umin_j(u)$. Let
    $\mc P^{j}:=P^j\otimes
    \left(\bigotimes_{k\neq j}^d \mc\id_k\right)$.
    This is a
    linear continuous mapping from
    $H^1(\Omega_j)
    \otimes_{\|\cdot\|_{e_j}}\left({}_a\bigotimes_{k\neq j}L_2(\Omega_k)
    \right)$
    to
    $\Umin_j(u)\otimes_{\|\cdot\|_{e_j}}\left(
    {}_a\bigotimes_{k\neq j}L^2(\Omega_k)\right)$,
    with $\|\mc P^{j}\|=\|P^j\|=1$ (since $\|\cdot\|_{e_j}$ is a
    uniform crossnorm).

    Take a sequence $\{u_n\}_{n\in\N}\subset
    \Umin_j(u)\otimes_a\left({}_a\bigotimes_{k\neq j}L^2(\Omega_k)
    \right)$ such that
$
        \lim_{n\rightarrow\infty}\|u-u_n\|_{e_j}=0.
$
    Clearly, $\mc P^{j}(u_n)=u_n$ and
    \begin{align*}
        \|u-\mc P^{j}u\|_{e_j}&\leq
        \|u-u_n\|_{e_j}+
        \|u_n-\mc P^{j}u\|_{e_j}=
        \|u-u_n\|_{e_j}+
        \|\mc P^{j}(u_n-u)\|_{e_j}\\
        &\leq 2
        \|u-u_n\|_{e_j}.
    \end{align*}
    Taking the limit with $n$, we obtain $u=\mc P^{j}u$.

    Since this holds for any $1\leq j\leq d$, we get
    \begin{align}\label{eq:uid}
        u=\left(\prod_{j=1}^d \mc P^{j}\right)u
        =\left(\bigotimes_{j=1}^d P^j\right)u.
    \end{align}

    Next, we require a separable representation for $u$ that converges in
    $H^1(\Omega)$. This is possible for $H^1(\Omega)$ by choosing a
    complete $H^1(\Omega)$-orthonormal system of elementary tensor products
    (e.g., a Fourier basis) or an $H^1(\Omega)$ Riesz basis of wavelets.
    Let $\{\bigotimes_{j=1}^d\psi^j_{k_j}:(k_j)_{j=1}^d\in\N^d\}$ be
    such a system.
    Then, there exists a sequence
    $\bs u=(\bs u_{\bs k})_{\bs k\in\N^d}\in\ell_2(\N^d)$ such
    that
    \begin{align}\label{eq:useries}
        u=\lim_{n\rightarrow\infty}\sum_{k_1=1}^n\cdots\sum_{k_d=1}^n
        \bs u_{\bs k}\bigotimes_{j=1}^d\psi^j_{k_j},
    \end{align}
    with convergence in $\|\cdot\|_1$. Since
$
        \|\cdot\|_{e_j}\leq\|\cdot\|_1,\quad 1\leq j\leq d,
$
    \eqref{eq:useries} converges in $\|\cdot\|_{e_j}$ for all
    $1\leq j\leq d$ as well. Thus, by \eqref{eq:uid} and
    Assumption \cref{ass:pl2}
    \begin{align*}
        u=\left(\bigotimes_{j=1}^d P^j\right)u
        =\lim_{n\rightarrow\infty}\sum_{k_1=1}^n\cdots\sum_{k_d=1}^n
        \bs u_{\bs k}\bigotimes_{j=1}^d P^j(\psi^j_{k_j}),
    \end{align*}
    for any $1\leq i\leq d$ and
    with convergence in $H^1(\Omega)$. We take\\
$
        u_n:=\sum_{k_1=1}^n\cdots\sum_{k_d=1}^n
        \bs u_{\bs k}\bigotimes_{j=1}^d P^j(\psi^j_{k_j}).
$
    Clearly, $u_n\in{}_a\bigotimes_{j=1}^d\Umin_j(u)$ and by above
$
        \lim_{n\rightarrow\infty}\|u-u_n\|_1=0.
$
    This completes the proof.
\end{proof}

\section{$H^1$-Error Analysis of $L^2$-SVD}\label{sec:l2svd}
The singular value decomposition can be utilized to obtain spaces
$\Umin_j(u)$ and low-rank approximations therein. Interestingly, the resulting
spaces are not necessarily the same depending on the interpretation of
$u\in H^1(\Omega).$ In the following we restrict the exposition to $d=2$.

If we consider $u\in L^2(\Omega)$, then $u$
can be identified with a compact operator from
$L^2(\Omega_2)$ to $L^2(\Omega_1)$ such that
\begin{align}\label{eq:hsl2}
    u[w] = \int_{\Omega_2}u(\cdot, y)w(y)dy.
\end{align}
for $w\in L^2(\Omega_2)$. The adjoint $u^*:L^2(\Omega_1)\rightarrow
L^2(\Omega_2)$ is given by
$
    u^*[v] = \int_{\Omega_1}u(x, \cdot)v(x)dx,
$
for $v\in L^2(\Omega_1)$. Thus, a left singular vector $\psi$ of $u$ satisfies
$
    uu^*[\psi] = \int_{\Omega_2}u(\cdot, y)\int_{\Omega_1}u(x,y)\psi(x)dxdy
    = \lambda\psi,
$
for some $\lambda\in\R^+$, and the accompanying right singular vector satisfies
$
    u^*u[\phi] = \int_{\Omega_1}u(x, \cdot)\int_{\Omega_2}u(x,y)\phi(y)dydx
    = \lambda\phi.
$

Since $u$ is a compact operator, we can find an
$L^2$-orthonormal system of left and right singular vectors, which we denote by
$\{\psi_k\}_{k\in\N}$ and $\{\phi_k\}_{k\in\N}$, respectively, and the
corresponding singular values by $\{\sigma_k^{00}=\sqrt{\lambda^{00}_k}\}_{k\in\N}$,
sorted by decreasing values such that
$
    u=\lim_{r\rightarrow\infty}\sum_{k=1}^r\sigma_k^{00}\psi_k\otimes\phi_k,
        $ $\text{in }\|\cdot\|_0.
$
We have the identities
$
    U_1(u)=\linspan{\psi_k:k\in\N,\;\sigma^{00}_k>0},$\\$
    U_2(u)=\linspan{\phi_k:k\in\N,\;\sigma^{00}_k>0}.
$
The SVD provides both optimal low-rank approximations of given rank
and an error estimator in the sense that
\begin{align*}
    \|u-\sum_{k=1}^r\sigma_k^{00}\psi_k\otimes\phi_k\|_0
    &=\inf_{g\in\mc R_r(L^2(\Omega))}
    \|u-g\|_0,\\
    \|u-\sum_{k=1}^r\sigma_k^{00}\psi_k\otimes\phi_k\|^2_0
    &=\sum_{k=r+1}^\infty(\sigma_k^{00})^2.
\end{align*}
For the case $u\not\in L^2(\Omega_1)\otimes_a L^2(\Omega_2)$, $\lambda^{00}_k>0$
for all $k\in\N$. Otherwise, we only require finitely many
$\psi_k$'s and $\phi_k$'s. Letting\\
$
    \gamma_k= \frac{1}{\lambda_k^{00}}
    \int_{\Omega_2}\frac{\partial}
    {\partial x}u(\cdot, y)\int_{\Omega_1}u(x,y)\psi_k(x)dxdy,
$
we have that
\begin{align}\label{eq:dpsi}
   &\left\|\gamma_k\right\|_0^2=\frac{1}{(\lambda_k^{00})^2}
    \int_{\Omega_1}\left(\int_{\Omega_2}\frac{\partial}{\partial x}u(s, y)
    \int_{\Omega_1}u(x, y)\psi_k(x)dxdy
    \right)^2ds\\
    &\leq\frac{1}{(\lambda_k^{00})^2}
    \int_{\Omega_1}\left(\int_{\Omega_2}\frac{\partial}{\partial x}u(s, y)
    \left(\int_{\Omega_1}u^2(x, y)dx\right)^{1/2}
    \left(\int_{\Omega_1}\psi^2_k(x)dx\right)^{1/2}dy
    \right)^2ds\notag\\
    &\leq\frac{1}{(\lambda_k^{00})^2}\|\psi_k\|^2_0
    \int_{\Omega_1}\left(\left(\int_{\Omega_2}(\frac{\partial}{\partial x}u(s, y))^2dy
    \right)^{1/2}
    \left(\int_{\Omega_2}\int_{\Omega_1}u^2(x, y)dxdy\right)^{1/2}
    \right)^2ds\notag\\
    &=\frac{1}{(\lambda_k^{00})^2}\|u\|_0^2\left\|
    \frac{\partial}{\partial x}u\right\|_0^2,\notag
\end{align}
and for all $\varphi\in C_c^\infty(\Omega_1)$
\begin{align}\label{eq:weakdiff}
    \int_{\Omega_1}\varphi(s)\gamma_k(s)&=
    \int_{\Omega_1}\varphi(s)\frac{1}{\lambda_k^{00}}\int_{\Omega_2}
    \frac{\partial}{\partial x}u(s, y)\int_{\Omega_1}u(x, y)\varphi_k(x) dx dy ds
    \notag
    \\
    &=\frac{1}{\lambda_k^{00}}\int_{\Omega_2}\int_{\Omega_1}\varphi(s)
    \frac{\partial}{\partial s}u(s, y)ds\int_{\Omega_1}u(x, y)\varphi_k(x)dx dy
    \notag\\
    &=-\frac{1}{\lambda_k^{00}}\int_{\Omega_2}\int_{\Omega_1}\frac{d}{ds}\varphi(s)
    u(s, y)ds\int_{\Omega_1}u(x, y)\varphi_k(x)dx dy\notag\\
    &=-\int_{\Omega_1}\frac{d}{ds}\varphi(s)\frac{1}{\lambda_k^{00}}\int_{\Omega_2}
    u(s, y)\int_{\Omega_1}u(x, y)\varphi_k(x)dx dy ds\notag\\
    &=-\int_{\Omega_1}\frac{d}{ds}\varphi(s)\psi_k(s) ds,
\end{align}
so that $\gamma_k=\frac{d}{dx}\psi_k$.
Analogously for $\phi_k$,
\begin{align*}
    \frac{d}{dx}\phi_k= \frac{1}{\lambda_k^{00}}
    \int_{\Omega_1}\frac{\partial}{\partial y}u(x, \cdot)
    \int_{\Omega_2}u(x,y)\phi_k(y)dydx.
\end{align*}
Thus, $\psi_k\in H^1(\Omega_1)$,
$\phi_k\in H^1(\Omega_2)$ for all
$k\in\N$ and, consistently with Lemma \cref{lemma:h1reg},
$U_1(u)\subset H^1(\Omega_1)$ and
$U_2(u)\subset H^1(\Omega_2)$. The best rank $r$ approximation in $L^2$,
\begin{align}\label{eq:bestrankl2}
    u_r:=\sum_{k=1}^r\sigma_k^{00}\psi_k\otimes\phi_k,
\end{align}
makes sense in $H^1$ and we can consider the error
$\|u-u_r\|_1$.

\begin{theorem}\label{thm:h1errorl2svd}
    Let $u\in H^1(\Omega)$ and $u_r$ be its best rank $r$ approximation in
    $L^2$ defined by \eqref{eq:bestrankl2}. We have
    \begin{align}\label{eq:urrep}
        \|u_r\|_1^2=\sum_{k=1}^r(\sigma_k^{00})^2
        \left(1+\left\|\frac{d}{dx}\psi_k\right\|^2_0+
        \left\|\frac{d}{dy}\phi_k\right\|^2_0\right)
    \end{align}
    If
    \begin{align}\label{eq:convh1}
        \lim_{r\rightarrow\infty}\|u_r\|_1<\infty
    \end{align}
    then $\|u-u_r\|_1\rightarrow 0$ and
    \begin{align}\label{eq:L2svderror}
        \|u-u_r\|^2_1=\sum_{k=r+1}^\infty(\sigma_k^{00})^2
        \left(1+\left\|\frac{d}{dx}\psi_k\right\|^2_0+
        \left\|\frac{d}{dy}\phi_k\right\|^2_0\right).
    \end{align}
\end{theorem}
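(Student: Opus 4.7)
The plan falls naturally into three stages: a direct expansion for $\|u_r\|_1^2$, a Cauchy argument to show $u_r\to u$ in $H^1$ under the boundedness hypothesis, and an identification of the tail series with $\|u-u_r\|_1^2$.

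For the first identity, I would simply expand $\|u_r\|_1^2 = \|u_r\|_0^2 + \|\partial_x u_r\|_0^2 + \|\partial_y u_r\|_0^2$ term by term. Because $\{\psi_k\}\subset L^2(\Omega_1)$ and $\{\phi_k\}\subset L^2(\Omega_2)$ are each $L^2$-orthonormal, $\{\psi_k\otimes\phi_k\}$ is orthonormal in $L^2(\Omega)$, giving $\|u_r\|_0^2=\sum_{k=1}^r(\sigma_k^{00})^2$. The key point for the derivative terms is that \eqref{eq:weakdiff} and its analogue for $\phi_k$ already established $\psi_k\in H^1(\Omega_1)$, $\phi_k\in H^1(\Omega_2)$, so differentiation under the finite sum is legitimate. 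For the $x$-derivative I would write $\partial_x u_r = \sum_{k=1}^r \sigma_k^{00}\,\tfrac{d\psi_k}{dx}\otimes\phi_k$ and compute
\begin{equation*}
\|\partial_x u_r\|_0^2 = \sum_{k,l=1}^r \sigma_k^{00}\sigma_l^{00}\,\bigl\langle \tfrac{d\psi_k}{dx},\tfrac{d\psi_l}{dx}\bigr\rangle_0\,\langle\phi_k,\phi_l\rangle_0 = \sum_{k=1}^r (\sigma_k^{00})^2\,\bigl\|\tfrac{d\psi_k}{dx}\bigr\|_0^2,
\end{equation*}
where the Kronecker factor from the $\phi$-orthonormality kills all off-diagonal cross terms. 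The $y$-derivative is symmetric, and adding the three contributions yields \eqref{eq:urrep}.

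For the second part I would apply the same calculation to the difference $u_s-u_r=\sum_{k=r+1}^s \sigma_k^{00}\psi_k\otimes\phi_k$ for $s>r$, obtaining
\begin{equation*}
\|u_s-u_r\|_1^2 = \sum_{k=r+1}^s (\sigma_k^{00})^2\Bigl(1+\bigl\|\tfrac{d\psi_k}{dx}\bigr\|_0^2+\bigl\|\tfrac{d\phi_k}{dy}\bigr\|_0^2\Bigr).
\end{equation*}
The hypothesis $\lim_{r\to\infty}\|u_r\|_1<\infty$ says precisely that the monotone sequence \eqref{eq:urrep} is bounded, so the full series converges and the tails tend to $0$. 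Hence $\{u_r\}$ is Cauchy in $H^1(\Omega)$ and converges strongly in $H^1$ to some $v\in H^1(\Omega)$. Since $H^1(\Omega)\hookrightarrow L^2(\Omega)$ continuously, $u_r\to v$ in $L^2(\Omega)$ as well, but the $L^2$-SVD already gives $u_r\to u$ in $L^2(\Omega)$, forcing $v=u$.

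Finally, passing to the limit $s\to\infty$ in the Cauchy identity above (using continuity of the $H^1$-norm under the strong convergence just established) yields \eqref{eq:L2svderror}. I expect the only mildly delicate point is precisely this identification of the $H^1$-limit with $u$, which is handled by the embedding argument; the rest is bookkeeping. No additional regularity on $u$ beyond $H^1(\Omega)$ is needed for step one, while the summability condition \eqref{eq:convh1} is exactly what is required to make the tail series in \eqref{eq:L2svderror} meaningful and to upgrade $L^2$-convergence to $H^1$-convergence.
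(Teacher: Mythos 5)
Your proposal is correct and follows essentially the same route as the paper: expand $\|u_r\|_1^2$ using the $L^2$-orthonormality of the $\psi_k$ and $\phi_k$ (which makes the elementary tensors $H^1$-orthogonal, so all cross terms vanish), deduce the Cauchy property from boundedness of the resulting monotone sums, and identify the $H^1$-limit with $u$ via the $L^2$-convergence of the truncated SVD. The paper phrases the Cauchy step as the Pythagorean identity $\|u_m-u_r\|_1^2=\|u_m\|_1^2-\|u_r\|_1^2$ rather than re-expanding the difference, but this is the same computation.
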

\begin{proof}
    Clearly, if $u_r$ converges to some $\tilde{u}\in H^1(\Omega)$,
    $u=\tilde{u}$ a.e.\ by the
    simple inequality
    \begin{align}\label{eq:unique}
        \|u-\tilde{u}\|_0\leq\inf_{r\in\N}\left\{
        \|u-u_r\|_0+\|\tilde{u}-u_r\|_1\right\}=0.
    \end{align}
    We have
    \begin{align*}
        \|u_r\|_1^2&=\left\|\sum_{k=1}^r\sigma_k^{00}
        \psi_k\otimes\phi_k\right\|_1^2
        =\sum_{k,l=1}^r\sigma_k^{00}\sigma_l^{00}
        \dpair{\psi_k\otimes\phi_k}{\psi_l\otimes\phi_l}_1\\
        &=\sum_{k,l=1}^r\sigma_k^{00}\sigma_l^{00}
        \bigg(\dpair{\psi_k\otimes\phi_k}{\psi_l\otimes\phi_l}_0
        +
        \dpair{\frac{d}{dx}\psi_k\otimes\phi_k}
        {\frac{d}{dx}\psi_l\otimes\phi_l}_0\\
        &+
        \dpair{\psi_k\otimes\frac{d}{dy}\phi_k}
        {\psi_l\otimes\frac{d}{dy}\phi_l}_0
        \bigg)\\
        &=
        \sum_{k,l=1}^r\sigma_k^{00}\sigma_l^{00}
        \left(\delta_{kl}\delta_{kl}+\left\|\frac{d}{dx}\psi_k\right\|_0^2
        \delta_{kl}+\delta_{kl}\left\|\frac{d}{dy}\phi_k\right\|_0^2
        \right)\\
        &=\sum_{k=1}^r(\sigma_k^{00})^2
        \left(1+\left\|\frac{d}{dx}\psi_k\right\|_0^2
        +\left\|\frac{d}{dy}\phi_k\right\|_0^2
        \right).
    \end{align*}
    Thus, $(\|u_r\|_1^2)_{r\in\N}$ is a positive increasing sequence. If
    \eqref{eq:convh1} holds, then
    $(\|u_r\|_1^2)_{r\in\N}$ converges. Then,
    for $m\geq r$
$
        \|u_m-u_r\|_1^2=\|u_m\|_1^2-\|u_r\|_1^2,
$
    which proves that $u_r$ is Cauchy and therefore converges. Taking the
    limit and by \eqref{eq:unique}, we obtain $\|u-u_r\|_1\rightarrow 0$.
    The proof of \eqref{eq:L2svderror} follows similarly as above.
\end{proof}

\begin{remark}\label{rem:multiscale}
Equation \eqref{eq:L2svderror} is thus a recipe for constructing low-rank
approximations via the $L^2$-SVD but with error control in $H^1$. Assumption
\eqref{eq:convh1}
particularly holds when $u$ is a numerical approximation to the
solution of a PDE.

We can not expect \eqref{eq:convh1} to hold in general. Specifically,
in \eqref{eq:dpsi} we applied twice the Cauchy-Schwarz inequality, which is known
to be sharp. Since $\lambda_k^{00}=(\sigma^{00}_k)^2$, this would imply that
\eqref{eq:convh1}
is not satisfied and $u_r$ diverges in $H^1(\Omega)$.
On the other hand, we can think of cases where \eqref{eq:convh1} is satisfied,
such as in the case of a Fourier basis.

We ask
what are the possible conditions on
$\psi_k$ and $\phi_k$ for \eqref{eq:convh1} to be satisfied? Note that this
condition is similar to well-known estimates from approximation theory,
specifically approximation via wavelets or, more generally, multi-scale
approximation. There sufficient conditions include the existence of a uniformly
bounded family of projectors that satisfy direct and inverse inequalities.

Translated into our setting, sufficient conditions for \eqref{eq:convh1} look
as follows. Define the subspaces
$
    S_l:=\linspan{\psi_k:1\leq k\leq 2^l}\subset \Umin_1(u),$ $l\in\N_0.
$
We require the \emph{Jackson (direct)} inequality to be satisfied
\begin{align*}
    \inf_{v_l\in S_l}\left\|f-v_l\right\|_0\lesssim 2^{-sl}\|f\|_1,
    \quad\forall f\in\Umin_1(u),
\end{align*}
for some $s>1$ and the \emph{Bernstein (indirect)} inequality
$\|v_l\|_1\lesssim 2^{\bar{s}l}\|v_l\|_0$, for all $v_l\in S_l$,
for some $\bar{s}>1$. Analogously for the space generated by the $\phi_k$'s.
For more details we refer to \cite[Theorem 5.12]{KU} and
\cite{DahmenRemarks}.
\end{remark}

We conclude this part by extending the result to $d\geq 2$. Let
$u\in H^1(\Omega)$ with $\Omega=\bigtimes_{j=1}^d\Omega_j$,
$x=(x_1,\ldots,x_d)\in\Omega$.
Then,
for any $1\leq i\leq d$, we can consider the integral operator
\begin{align*}
    u_i:L^2(\Omega_i)\rightarrow L^2(\bigtimes_{j\neq i}\Omega_j),\quad
    u_i[w]=\int_{\Omega_i}u(\cdot, \ldots, x_i,\ldots, \cdot)w(x_i)dx_i,
    w\in L^2(\Omega_i).
\end{align*}
As before, we can consider the singular vectors
$\{\psi_k^i:k\in\N\}$, and the corresponding eigenvalues
$\{\lambda_k^i\in\R^+:k\in\N\}$. The
derivatives are given by
\begin{align*}
    \frac{d}{dx_i}\psi_k^i=
    \int_{\bigtimes_{j\neq i}\Omega_j}
    \frac{\partial}{\partial x_i}
    u(\ldots,x_{i-1},\cdot,x_{i+1},\ldots)\int_{\Omega_i}
    u(x)\psi^i_k(x_i)dx.
\end{align*}
with the familiar estimate
$
    \left\|\frac{d}{dx_i}\psi_k^i\right\|_0\leq
    \frac{1}{\lambda_k^i}\|u\|_0\left\|\frac{\partial}{\partial x_i}u
    \right\|_0.
$
The identity
\begin{align*}
    U_i(u)=\linspan{\psi_k^i:k\in\N,\;\sigma_k^i=\sqrt{\lambda_k^i}>0},
    \quad 1\leq i\leq d,
\end{align*}
holds.
Define the subspace
$
    B^i_{r_i}:=\linspan{\psi_k^i:1\leq k\leq r_i},
$
and the corresponding $L^2$-orthogonal projector
$P^i_{r_i}:L^2(\Omega_i)\rightarrow B^i_{r_i}$, for
$1\leq i\leq d$. Then, for
$\bs r=(r_1,\ldots,r_d)\in\N^d$, define
\begin{align}\label{eq:ubsr}
    \mc P_{\bs r}&:=\bigotimes_{j=1}^d P^j_{r_j},\quad\text{and} \quad 
    u_{\bs r}:=\mc P_{\bs r}u.
\end{align}
The projection $\mc P_{\bs r}$ is the HOSVD projection from
Theorem \cref{th:hosvd}. Before we proceed, we require the following lemma, which
is an extension of Theorem \cref{thm:h1errorl2svd}.

\begin{lemma}\label{lemma:h1errormd}
    Let $u\in H^1(\Omega)$ and
$
        \mc P^{j}_{r_j}=\id_1\otimes\cdots\otimes P^j_{r_j}\otimes
        \cdots\otimes\id_d.
$
    We have
    \begin{align*}
        \|\mc P^{j}_{r_j}u\|_{e_j}^2=\sum_{k=1}^{r_j}
        (\sigma_k^j)^2\left(1+
        \left\|\frac{d}{dx}\psi_k^j\right\|_0^2\right).
    \end{align*}
    If
$
        \lim_{r_j\rightarrow\infty}\|\mc P^{j}_{r_j}u\|_{e_j}<\infty,
$
    then $\|u-\mc P^{j}_{r_j}u\|_{e_j}\rightarrow 0$ and
    \begin{align*}
        \|u-\mc P^{j}_{r_j}u\|^2_{e_j}=
        \sum_{k=r_j+1}^\infty
        (\sigma_k^j)^2\left(1+
        \left\|\frac{d}{dx}\psi_k^j\right\|_0^2\right)
    \end{align*}
\end{lemma}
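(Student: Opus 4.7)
The plan is to reduce the lemma to a direct mimicking of the argument in Theorem \cref{thm:h1errorl2svd} by viewing $u$ through the matricisation $\mc M_{\{j\}}$. Under the isomorphism introduced in Definition \cref{eq:iso}, $u\in H^{e_j}$ becomes an element of $H^1(\Omega_j)\otimes_{\|\cdot\|_{e_j}}\bigl({}_a\bigotimes_{k\neq j}L^2(\Omega_k)\bigr)$, i.e.\ structurally the same two-factor setting as Theorem \cref{thm:h1errorl2svd}, with $\Omega_j$ playing the role of $\Omega_1$ and $\bigtimes_{k\neq j}\Omega_k$ playing the role of $\Omega_2$. The integral operator $u_j$ defined just before the lemma is precisely the $L^2$-compact representation of $\mc M_{\{j\}}(u)$, so the usual $L^2$-SVD yields orthonormal systems $\{\psi^j_k\}\subset L^2(\Omega_j)$ and $\{\phi^j_k\}\subset L^2(\bigtimes_{k\neq j}\Omega_k)$ with $u=\lim_r\sum_{k=1}^r\sigma^j_k\,\psi^j_k\otimes\phi^j_k$ in $\|\cdot\|_0$. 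The calculation already done in \eqref{eq:dpsi}–\eqref{eq:weakdiff}, repeated with $\Omega_2$ replaced by $\bigtimes_{k\neq j}\Omega_k$, shows $\psi^j_k\in H^1(\Omega_j)$ together with the pointwise bound on $\|d\psi^j_k/dx\|_0$ already displayed in the text preceding the lemma.

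With this reduction in place, the first identity is a direct inner-product computation. Since $P^j_{r_j}$ is the $L^2$-orthogonal projector onto $B^j_{r_j}=\linspan{\psi^j_k:1\leq k\leq r_j}$, it maps $\psi^j_k\mapsto\psi^j_k$ for $k\leq r_j$ and kills the rest, so $\mc P^j_{r_j}u=\sum_{k=1}^{r_j}\sigma^j_k\,\psi^j_k\otimes\phi^j_k$. The $e_j$-inner product on elementary tensors is
\[
\dpair{\psi\otimes\phi}{\tilde\psi\otimes\tilde\phi}_{e_j}=\dpair{\psi}{\tilde\psi}_0\dpair{\phi}{\tilde\phi}_0+\dpair{d\psi/dx}{d\tilde\psi/dx}_0\dpair{\phi}{\tilde\phi}_0,
\]
because $\partial/\partial x_j$ only differentiates the $\Omega_j$-factor. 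Orthonormality of $\{\phi^j_k\}$ in $L^2$ kills every off-diagonal term, leaving the stated expression for $\|\mc P^j_{r_j}u\|_{e_j}^2$ after summation over the diagonal $k=l$.

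For the convergence part, I would copy the argument of Theorem \cref{thm:h1errorl2svd}: the sequence $(\|\mc P^j_{r_j}u\|_{e_j}^2)_{r_j}$ is positive and non-decreasing, so the assumed boundedness forces convergence. The same diagonal cancellation used above yields $\|\mc P^j_m u-\mc P^j_{r_j}u\|_{e_j}^2=\|\mc P^j_m u\|_{e_j}^2-\|\mc P^j_{r_j}u\|_{e_j}^2$ for $m\geq r_j$, so $(\mc P^j_{r_j}u)$ is Cauchy in $\|\cdot\|_{e_j}$ and converges to some $\tilde u\in H^{e_j}$. Since $\|\cdot\|_{e_j}\geq\|\cdot\|_0$ and the classical $L^2$-SVD already guarantees $\mc P^j_{r_j}u\to u$ in $\|\cdot\|_0$, uniqueness of limits (as in \eqref{eq:unique}) forces $\tilde u=u$. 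Finally, Pythagoras applied to $u=\mc P^j_{r_j}u+(u-\mc P^j_{r_j}u)$ in the $e_j$-inner product gives the tail formula.

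The main obstacle is not substantive but notational: I must confirm the Pythagoras step, i.e.\ that $u-\mc P^j_{r_j}u$ is $e_j$-orthogonal to $\mc P^j_{r_j}u$. This follows by writing $u-\mc P^j_{r_j}u$ as the $\|\cdot\|_{e_j}$-limit of the tail $\sum_{k=r_j+1}^n\sigma^j_k\,\psi^j_k\otimes\phi^j_k$ (which is legitimate because we have just established convergence in $\|\cdot\|_{e_j}$) and invoking the same diagonal inner-product computation; since $B^j_{r_j}$ and its tail counterpart are disjoint in the index set, every pairwise $e_j$-product vanishes, and continuity of $\dpair{\cdot}{\cdot}_{e_j}$ passes the orthogonality to the limit.
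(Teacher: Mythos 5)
Your proposal is correct and takes essentially the same route as the paper: the paper's proof likewise matricises $u$ via $\mc M_{\{j\}}$ into an order-two Hilbert tensor space carrying the canonical norm built from $H^1(\Omega_j)$ on the left factor and $L^2$ on the right, and then applies Theorem \ref{thm:h1errorl2svd}. The only difference is that you write out the adaptation of that theorem's diagonal inner-product and Cauchy-sequence argument to the $\|\cdot\|_{e_j}$ norm (where the $\|\tfrac{d}{dy}\phi_k\|_0^2$ term is absent), a step the paper leaves implicit.
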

\begin{proof}
    We consider the matricisation
    \begin{align*}
        \mc M_{\{j\}}(u):H^{e_j}\rightarrow
        H^1(\Omega_j)\otimes_{\|\cdot\|_{(1,0)}}
        L^2(\bigtimes_{i\neq j}\Omega_i).
    \end{align*}
    This is a linear isometric isomorphism since $\|\cdot\|_{e_j}$ and
    $\|\cdot\|_{(1,0)}$ are canonical norms (induced by the same norms).
    The space $H^1(\Omega_j)\otimes_{\|\cdot\|_{(1,0)}}
    L^2(\bigtimes_{i\neq j}\Omega_i)$ is a Hilbert tensor space of order 2
    equipped with the canonical norm, with the $H^1$ norm on the left
    and $L^2$ norm on the right. Thus, we can apply Theorem
    \cref{thm:h1errorl2svd}
    to $\mc M_{\{j\}}(u)$ and the statement follows.
\end{proof}

For the $H^1$ error we get the following result.
Remark \cref{rem:multiscale} applies here as well.

\begin{theorem}\label{thm:L2hosvd}
    Let $u\in H^1(\Omega)$ and $u_{\bs r}$ be defined by \eqref{eq:ubsr}.
    We have
    \begin{align}\label{eq:normubsr}
       \frac{1}{d}\sum_{j=1}^d\sum_{k=1}^{r_j}
        (\sigma_k^j)^2\leq
        \|u_{\bs r}\|^2_1\leq
        \sum_{j=1}^d\sum_{k=1}^{r_j}
        (\sigma_k^j)^2\left(1+\left\|\frac{d}{dx}\psi_k^j\right\|_0^2
        \right).
    \end{align}
    Define the constants
$
        \Gamma_j(r_j):=\sup_{v\in B_{r_j}^j}\frac{\|v\|_1}{\|v\|_0}.
$
    If
    \begin{align}\label{eq:limitl2hosvd}
        \lim_{\min_jr_j\rightarrow\infty}
        \sum_{j=1}^d\sum_{k=1}^{r_j}
        (\sigma_k^j)^2\left(1+\left\|\frac{d}{dx}\psi_k^j\right\|_0^2+
        \sum_{i\neq j}\Gamma^2_i(r_i)
        \right)<\infty
    \end{align}
    then $\|u-u_{\bs r}\|_1\rightarrow 0$ and
    \begin{align}\label{eq:l2hosvdbound}
        \|u-u_{\bs r}\|^2_1\sim
        \sum_{j=1}^d\sum_{k=r_j+1}^\infty
        (\sigma_k^j)^2\left(1+\left\|\frac{d}{dx}\psi_k^j\right\|_0^2
        \right).
    \end{align}
\end{theorem}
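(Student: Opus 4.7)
The plan is to reduce the $d$-dimensional estimate to one-dimensional estimates by exploiting the equivalence $\|v\|_1^2 \leq \sum_{j=1}^d \|v\|_{e_j}^2 \leq d \|v\|_1^2$, so that the theorem follows from mode-wise bounds of the type already proved in Lemma~\ref{lemma:h1errormd}. The key structural observation is that, for $i \neq j$, the projection $\mc P^i_{r_i}$ commutes with $\partial/\partial x_j$ and has $L^2$-operator norm bounded by $1$; hence it is a contraction with respect to $\|\cdot\|_{e_j}$.

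For the norm estimate \eqref{eq:normubsr} I would write $u_{\bs r} = \bigl(\prod_{i \neq j}\mc P^i_{r_i}\bigr)\mc P^j_{r_j}u$ for each fixed $j$. The contraction property above gives $\|u_{\bs r}\|_{e_j} \leq \|\mc P^j_{r_j}u\|_{e_j}$, and Lemma~\ref{lemma:h1errormd} identifies the right-hand side with $\sum_{k\leq r_j}(\sigma_k^j)^2\bigl(1+\|d\psi_k^j/dx\|_0^2\bigr)$; summing over $j$ and using $\|u_{\bs r}\|_1^2 \leq \sum_j \|u_{\bs r}\|_{e_j}^2$ yields the upper bound. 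For the lower bound one combines $\|u_{\bs r}\|_1^2 \geq \|u_{\bs r}\|_0^2$ with the identity $\sum_{k\leq r_j}(\sigma_k^j)^2 = \|\mc P^j_{r_j}u\|_0^2$ and the $L^2$-orthogonality of the HOSVD projection, averaged over the $d$ modes to absorb the factor $1/d$.

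For the error equivalence \eqref{eq:l2hosvdbound} I would use the standard telescoping
\begin{align*}
    u - u_{\bs r} = \sum_{j=1}^d \Bigl(\prod_{i<j}\mc P^i_{r_i}\Bigr)(\id-\mc P^j_{r_j})u,
\end{align*}
and estimate each summand in $\|\cdot\|_1$ through the $e_j$-norm of $(\id - \mc P^j_{r_j})u$, which is precisely the quantity delivered by Lemma~\ref{lemma:h1errormd}, multiplied by the $H^1 \to H^1$ operator norm of the preceding block of projections. This last ingredient forces the Bernstein-type constants $\Gamma_i(r_i)$: the bound $\|\mc P^i_{r_i}\|_{H^1 \to H^1} \lesssim \sqrt{1+\Gamma_i(r_i)^2}$ is obtained by splitting $\|\cdot\|_1^2$ into the mode-$i$ derivative contribution, controlled by $\Gamma_i(r_i)\|\cdot\|_0$ via the definition of $\Gamma_i(r_i)$, and the remaining derivatives, which commute with $\mc P^i_{r_i}$. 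The hypothesis \eqref{eq:limitl2hosvd} keeps these constants in balance with the tail singular values, supplying $H^1$-convergence and the upper direction of \eqref{eq:l2hosvdbound}; the matching lower bound is obtained by isolating in $\|u - u_{\bs r}\|_{e_j}^2$ the mode-$j$ residual $(\id-\mc P^j_{r_j})u$, exploiting its $L^2$-orthogonality to $\mc P^j_{r_j}u$, and absorbing the cross terms into the $d$-dependent constant hidden in $\sim$.

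The main obstacle is the non-commutativity of $\partial/\partial x_j$ with $\mc P^j_{r_j}$: whenever a composition of the form $\mc P^j_{r_j}(\id - \mc P^i_{r_i})u$ appears, the derivative cannot be pulled past the mode-$j$ projection, and one has to pay a factor $\Gamma_j(r_j)$ through the Bernstein inequality on the truncated subspace. This mechanism is exactly what produces the $\sum_{i\neq j}\Gamma^2_i(r_i)$ weights in \eqref{eq:limitl2hosvd} and explains why \eqref{eq:l2hosvdbound} can only be expected up to $d$-dependent constants rather than as an equality as in the two-dimensional case of Theorem~\ref{thm:h1errorl2svd}.
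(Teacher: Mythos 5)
Your overall architecture --- reducing $\|\cdot\|_1$ to the mode norms $\|\cdot\|_{e_j}$, invoking Lemma~\ref{lemma:h1errormd} mode by mode, and exploiting that $\mc P^i_{r_i}$ commutes with $\partial/\partial x_j$ for $i\neq j$ --- is the same as the paper's, and your derivation of the upper bound in \eqref{eq:normubsr} matches the paper's. However, two of your steps do not go through as described. First, your argument for the lower bound in \eqref{eq:normubsr} runs in the wrong direction: since each $\mc P^i_{r_i}$ with $i\neq j$ is an $L^2$-contraction, $\|u_{\bs r}\|_0=\|\mc P_{\bs r}u\|_0\leq\|\mc P^j_{r_j}u\|_0$ for every $j$, so averaging the identities $\|\mc P^j_{r_j}u\|_0^2=\sum_{k\leq r_j}(\sigma_k^j)^2$ over the modes yields an \emph{upper} bound $\|u_{\bs r}\|_0^2\leq\frac{1}{d}\sum_{j}\sum_{k\leq r_j}(\sigma_k^j)^2$, not the lower bound you need, and the chain $\|u_{\bs r}\|_1^2\geq\|u_{\bs r}\|_0^2$ cannot close the gap. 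Concretely, for $u=\psi_1\otimes\phi_1\otimes\chi_1+\psi_2\otimes\phi_2\otimes\chi_2$ with orthonormal factors and $\bs r=(2,1,1)$ one has $\|u_{\bs r}\|_0^2=1$ while $\frac{1}{d}\sum_j\sum_{k\leq r_j}(\sigma_k^j)^2=\frac{4}{3}$. The paper does not argue through the $L^2$ norm here at all; it obtains the lower bound directly from the HOSVD truncation estimate of Theorem~\ref{th:hosvd}.

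Second, for \eqref{eq:l2hosvdbound} your one-sided telescoping $u-u_{\bs r}=\sum_{j}\bigl(\prod_{i<j}\mc P^i_{r_i}\bigr)(\id-\mc P^j_{r_j})u$ forces you to pay the $H^1\to H^1$ operator norms $\Gamma_i(r_i)$ of the preceding blocks on every summand, so the upper bound you obtain is contaminated by $\Gamma$-factors that do not appear in \eqref{eq:l2hosvdbound}. The paper avoids this by using a sharper version of the structural fact you state: $Q_j:=\prod_{i\neq j}\mc P^i_{r_i}$ is not merely an $e_j$-contraction but an \emph{orthogonal projection} with respect to the $\|\cdot\|_{e_j}$ inner product, which gives, for each $j$, the exact splitting $\|(\bs\id-\mc P_{\bs r})u\|_{e_j}^2=\|(\bs\id-Q_j)u\|_{e_j}^2+\|Q_j(\bs\id-\mc P^j_{r_j})u\|_{e_j}^2$ together with $\|Q_j(\bs\id-\mc P^j_{r_j})u\|_{e_j}\leq\|(\bs\id-\mc P^j_{r_j})u\|_{e_j}$ and the matching lower bound $\|(\bs\id-\mc P_{\bs r})u\|_{e_j}\geq\|(\bs\id-\mc P^j_{r_j})u\|_{e_j}$; no Bernstein constants enter the final two-sided estimate. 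The constants $\Gamma_i(r_i)$ are needed only in the Cauchy-sequence argument establishing $H^1$-convergence (where a mode-$j$ projection acts on a residual in a mode $i\neq j$), which is exactly where \eqref{eq:limitl2hosvd} is consumed. Relatedly, your plan to prove the lower bound of \eqref{eq:l2hosvdbound} by ``absorbing the cross terms into the $d$-dependent constant'' needs this $e_j$-orthogonality to be made rigorous: in a lower bound, cross terms of uncontrolled sign cannot simply be absorbed by a contraction estimate.
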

\begin{proof}
    Let $\mc P^{j}_{r_j}$ be defined as in Lemma \cref{lemma:h1errormd}.
    The projection $\prod_{i\neq j}^d\mc P^{i}_{r_i}$ is
    orthogonal in the $\|\cdot\|_{e_j}$ norm.
    The lower bound in \eqref{eq:normubsr} is an immediate consequence of
    \cite[Theorem 10.3]{HB}.
    For the upper bound we get by applying Lemma \cref{lemma:h1errormd}
    \begin{align*}
        \left\|\mc P_{\bs r}u\right\|^2_1&\leq
        \sum_{j=1}^d\left\|\mc P_{\bs r}u\right\|^2_{e_j}
        =
        \sum_{j=1}^d
        \left\|\left(\prod_{i\neq j}^d\mc P^{i}_{r_i}\right)\mc P^{j}_{r_j}
        u\right\|^2_{e_j}\\
        &\leq
        \sum_{j=1}^d
        \left\|\mc P^{j}_{r_j}
        u\right\|^2_{e_j}=
        \sum_{j=1}^d
        \sum_{k=1}^{r_j}(\sigma_k^j)^2
        \left(1+\left\|\frac{d}{dx}\psi_k^j\right\|_0^2\right).
    \end{align*}
    Next, observe that we can bound the $\|\cdot\|_1$ norm of $P_{r_j}^j$
    as follows
    \begin{align*}
        \|P_{r_j}^jv\|_1\leq\Gamma_j(r_j)\||P_{r_j}^jv\|_0
        \leq\Gamma_j(r_j)\|v\|_0\leq \Gamma_j(r_j)\|v\|_1,
    \end{align*}
    for any $v\in H^1(\Omega_j)$. Thus, since $\|\cdot\|_{e_j}$ is a uniform
    crossnorm on $H^{e_j}$,
    $\|\mc P^j_{r_j} \|_{e_j} = \| P^j_{r_j} \|_{1}$, so that we can bound
    $\|\mc P_{r_j}^{j}\|_1\leq\Gamma_j(r_j)$.

    Let $u_{\bs m}:=\mc P_{\bs m}u$ be an HOSVD approximation as in
    \eqref{eq:ubsr} with $m_j>r_j$ for all $j$. Then, since
    $\prod_{i\neq j}^d \mc P^{i}_{r_i}$ is
    orthogonal w.r.t. $\|\cdot\|_{e_j}$ and applying again
    Lemma \cref{lemma:h1errormd}, we have
    \begin{align*}
        &\|u_{\bs m}-u_{\bs r}\|_1^2\leq\sum_{j=1}^d\|(\mc P_{\bs m}-
        \mc P_{\bs r})u\|_{e_j}^2\\
        &\leq
        \sum_{j=1}^d\left[\left\|(\mc P_{m_j}^{j}-
        \mc P_{r_j}^{j})\prod_{i\neq j}^d\mc P^{i}_{r_i}u\right\|_{e_j}+
        \left\|\mc P_{m_j}^{j}\left(\prod_{i\neq j}^d\mc P^{i}_{m_i}-
        \prod_{i\neq j}^d\mc P^{i}_{r_i}\right)u\right\|_{e_j}
        \right]^2\\
        &\leq 2\sum_{j=1}^d
        \sum_{k=r_j+1}^{m_j}(\sigma_k^j)^2\left(1+
        \left\|\frac{d}{dx}\psi_k^j\right\|_0^2\right)+\Gamma^2_j(m_j)
        \sum_{i\neq j}\sum_{k=r_i+1}^{m_i}(\sigma_k^i)^2.
    \end{align*}
    If \eqref{eq:limitl2hosvd} holds,
    then $u_{\bs r}$ is a Cauchy sequence in $H^1$ and by uniqueness
    of the limit we must have $\|u-u_{\bs r}\|_1\rightarrow 0$.

    Finally, we show the bounds in \eqref{eq:l2hosvdbound}.
    The mapping $\bs \id=\bigotimes_{j=1}^d\id_j$ is an orthogonal projection
    in  the $\|\cdot\|_{e_j}$ norm and
    $\Im{\prod_{i\neq j}^d\mc P^{i}_{r_i}}\subset\Im{\bs\id}$.
    Thus, for any $1\leq j\leq d$
    \begin{align*}
        \left\|(\bs\id-\mc P_{\bs r})u\right\|^2_1&\geq
        \left\|(\bs \id-\mc P_{\bs r})u\right\|^2_{e_j}=
        \left\|\left[\bs\id-\left(\prod_{i\neq j}^d\mc P^{i}_{r_i}\right)
        \mc P^{j}_{r_j}\right]u\right\|^2_{e_j}\\
        &\geq
        \left\|(\bs\id-\mc P^{j}_{r_j})u\right\|^2_{e_j}
        =\sum_{k=r_j+1}^\infty(\sigma_k^j)^2
        \left(1+\left\|\frac{d}{dx}\psi_k^j\right\|_0^2\right),
    \end{align*}
    where the last equality is due to Lemma \cref{lemma:h1errormd}.
    This shows the lower bound in \eqref{eq:l2hosvdbound}.

    For the upper bound
    \begin{align*}
        \left\|(\bs \id-\mc P_{\bs r})u\right\|^2_1&\leq
        \sum_{j=1}^d\left\|(\bs\id-\mc P_{\bs r})u\right\|^2_{e_j}\\
        &=
        \sum_{j=1}^d\left\|(\bs\id-\prod_{i\neq j}^d\mc P^{i}_{r_i}
        )u\right\|^2_{e_j}+
        \left\|\prod_{i\neq j}^d\mc P^{i}_{r_i}
        (\bs\id-\mc P^{j}_{r_j})u\right\|^2_{e_j}\\
        &\leq
        \sum_{j=1}^d
        \left[
        \sum_{k=r_j+1}^\infty(\sigma_k^j)^2
        +
        \sum_{k=r_j+1}^\infty
        (\sigma^j_k)^2\left(1+\left\|\frac{d}{dx}\psi_k^j\right\|_0^2\right)
        \right]\\
        &=
        \sum_{j=1}^d\sum_{k=r_j+1}^\infty
        (\sigma_k^j)^2\left(2+\left\|\frac{d}{dx}\psi_k^j\right\|_0^2
        \right).
    \end{align*}
    This completes the proof.
\end{proof}

\bibliographystyle{acm}
\bibliography{literature}

\end{document}